\newcommand{\lap}{\mbox{$\bigtriangleup$}}
\newcommand{\be}{\begin{equation}}
\newcommand{\ee}{\end{equation}}
\newtheorem{theorem}{Theorem}[section]
\newtheorem{lemma}[theorem]{Lemma}
\newtheorem{corollary}[theorem]{Corollary}
\theoremstyle{definition}
\theoremstyle{remark}
\newtheorem{remark}{Remark}[section]
\newtheorem{counterexample}{Counterexample}
\theoremstyle{remark}
\numberwithin{equation}{section}
\begin{document}

\title{Gibbons' conjecture for entire solutions of master equations}

\author{Wenxiong Chen }
\address{Department of Mathematical Sciences, Yeshiva University, New York, NY,  10033 USA}
\email{wchen@yu.edu}

\author{Lingwei Ma}
\address{School of Mathematical Sciences, Tianjin Normal University,
Tianjin, 300387, P.R. China, and Department of Mathematical Sciences, Yeshiva University, New York, NY, 10033 USA}
\email{mlw1103@outlook.com}

\date{\today}

\begin{abstract}
In this paper, we establish a generalized version of Gibbons' conjecture in the context of the master equation
\begin{equation*}
    (\partial_t-\Delta)^s u(x,t)=f(t,u(x,t)) \,\,  \mbox{in}\,\,  \mathbb{R}^n\times\mathbb{R}.
\end{equation*}
We show that, for each $t\in\mathbb{R}$, the bounded entire solution $u(x,t)$ must be monotone increasing in one direction, and furthermore it is one-dimensional symmetric under certain uniform convergence assumption on $u$ and an appropriate decreasing condition on $f$. These conditions are slightly weaker than their counter parts proposed in the original Gibbons' conjecture.

To overcome the difficulties in proving the Gibbons' conjecture and the impediments caused by the strong correlation between space and time of fully fractional heat operator $(\partial_t-\Delta)^s$, we introduce some new ideas and provide several new insights. More precisely, we first derive a weighted average inequality, which not only provides a straightforward proof for the maximum principle in bounded domains, but also plays a crucial role in further deducing the maximum principle in unbounded domains. Such average inequality and maximum principles are essential ingredients to
carry out the sliding method, and then we apply this direct method to prove the Gibbons' conjecture in the setting of the master equation.

It is important to note that the holistic approach developed in this paper is highly versatile, and will become useful tools in investigating various qualitative properties of solutions as well as in establishing the Gibbons' conjecture for a broad range of fractional elliptic and parabolic equations and systems.

\bigskip

{\em Mathematics Subject classification} (2020): 35R11; 35K58; 35B50; 26A33.
\bigskip

{\em Keywords:} Gibbons' conjecture; master equation; average inequality, maximum principles in unbounded domains, sliding method; monotonicity; one-dimensional symmetry.
\end{abstract}

\maketitle

\section{Introduction}
\label{s:introduction}
 Gibbons' conjecture is associated with a striking question put forth by Italian mathematician Ennio De Giorgi \cite{Giorgi} during the 1970s, known as the following

\textbf{De Giorgi's conjecture.} Suppose that $u(x)$ is an entire solution of the equation
\begin{equation}\label{AC-equ}
  -\Delta u=u-u^3,\,\, x=(x',x_n)\in \mathbb{R}^n,
\end{equation}
satisfying
\begin{equation*}
  |u(x)|\leq 1\,\,\mbox{and}\,\, \frac{\partial u}{\partial x_n}>0 \,\, \mbox{in} \,\,\mathbb{R}^n.
\end{equation*}
Then the level sets of $u(x)$ must be hyperplanes, at least for $n\leq 8$.

The dimensional restrictions in De Giorgi's conjecture is related to the Bernstein problem (cf. \cite{Simons}), which asserts that the minimal graph in Euclidean space must be a hyperplane, as long as the dimension of the ambient space does not exceed $8$. Indeed, Bombieri, De Giorgi and Giusti \cite{BGG} found that the minimal graph is not a hyperplane in $\mathbb{R}^n$ with $n\geq 9$.
Despite attracting a significant amount of attention and study from numerous mathematicians for a long time, this challenging conjecture remains unproven in its full generality even today. It has only been completely resolved in $\mathbb{R}^2$ and $\mathbb{R}^3$ by Ghoussoub and Gui \cite{GG} and Amerosio and Cabre \cite{AC}, respectively.
When the dimension $4\leq n\leq8$,  Ghoussoub and Gui \cite{GG2}, and Savin \cite{Savin} confirmed this conjecture under an additional convergence assumption that
\begin{equation*}
 \displaystyle\lim_{x_n\rightarrow\pm\infty}u(x',x_n) = \pm1 \,\,\mbox{for any} \,\, x'\in \mathbb{R}^{n-1}.
\end{equation*}
 Instead, the answer is definitely negative as soon as the dimension of  $\mathbb{R}^n$ becomes greater than $8$, since Pino, Kowalczy and Wei \cite{PKW} presented counterexamples to De Giorgi's conjecture in high-dimensional spaces.

Motivated by the problem of detecting the shape of interfaces in cosmology, Gary W. Gibbons \cite{Carbou} proposed a weaker version of De Giorgi's conjecture that replaces the one-direction monotonic condition $\frac{\partial u}{\partial x_n}>0$ by a uniform convergence assumption
\begin{equation}\label{uniform-convergence}
 \displaystyle\lim_{x_n\rightarrow\pm\infty}u(x',x_n) = \pm1 \,\,\mbox{uniformly with respect to} \,\, x'\in \mathbb{R}^{n-1}.
\end{equation}
This is referred to as the Gibbons' conjecture.

\textbf{Gibbons' conjecture.} Let $u(x)$ be an entire solution of \eqref{AC-equ} satisfying
$|u(x)|\leq 1$ and \eqref{uniform-convergence}, then $u(x)$ is monotonically increasing with respect to $x_n$ and depends exclusively on $x_n$.

Fortunately, Gibbons' conjecture has been successfully established in any dimensions by various methods, including the moving plane method by Farina \cite{Farina}, the probabilistic arguments by Barlow, Bass, and Gui \cite{BBG}, and the sliding method by Berestycki, Hamel, and Monneau \cite{BHM}. It is worth mentioning that their results applied to more general nonhomogeneous terms $f$ than the De Giorgi-type nonlinearities $f(u)=u-u^3$.
Among them, the celebrated sliding method was first introduced by Berestycki and Nirenberg \cite{BN} to establish some qualitative properties of positive solutions to the local elliptic equations.

Afterwards, Wu and Chen \cite{WuChen} developed a direct sliding method, which is valuable in many applications, such as in  deriving monotonicity, one-dimensional symmetry, uniqueness, and nonexistence of solutions to elliptic equations and systems involving fractional Laplacians as well as fractional $p$-Laplacians. Please refer to \cite{CH, Liu, MZ2, MZ3} and an exhaustive survey \cite{CHM} for details. Such direct method avoids the heavy use of classical extension method  established in \cite{CS} to overcome the difficulties caused by the non-locality of fractional operators. More importantly, this direct sliding method can be used to extend and prove Gibbons' conjecture in the settings of other fractional elliptic equations involving
various nonlocal operators (cf. \cite{CBL, DQW, Le, SL, QWZ}).
In contrast, there are few papers on the Gibbons' conjecture for entire solutions of parabolic equations
besides  a recent article by Chen and Wu \cite{ChenWu}. They developed an appropriate sliding method to prove the Gibbons' conjecture for entire solutions of the following fractional reaction-diffusion equation
\begin{equation}\label{frac-para}
  u_t(x,t)+(-\Delta)^s u(x,t)=f(t,u(x,t)),\,\, \mbox{in}\,\, \mathbb{R}^n\times\mathbb{R}.
\end{equation}

Inspired by the previous literature, in this paper, we focus on the Gibbons' conjecture for the following master equation
\begin{equation}\label{model}
    (\partial_t-\Delta)^s u(x,t)=f(t,u(x,t)) ,\,\,  \mbox{in}\,\,  \mathbb{R}^n\times\mathbb{R}.
\end{equation}
We show that the entire solution of \eqref{model} is strictly monotonic in one direction and depends only on one Euclidean variable. Here the fully fractional heat operator $(\partial_t-\Delta)^s$ was first proposed by M. Riesz in \cite{Riesz}. It is a nonlocal operator of order $2s$ in space and of order $s$ in time, and can be defined as the following integral form
\begin{equation}\label{nonlocaloper}
(\partial_t-\Delta)^s u(x,t)
:=C_{n,s}\int_{-\infty}^{t}\int_{\mathbb{R}^n}
  \frac{u(x,t)-u(y,\tau)}{(t-\tau)^{\frac{n}{2}+1+s}}e^{-\frac{|x-y|^2}{4(t-\tau)}}\operatorname{d}\!y\operatorname{d}\!\tau,
\end{equation}
where the integral in $y$ is taken in the Cauchy principle value sense,
the normalization positive constant $$C_{n,s}=\frac{1}{(4\pi)^{\frac{n}{2}}|\Gamma(-s)|},$$
with $\Gamma(\cdot)$ denoting the Gamma function and $0<s<1$. Note that this operator is nonlocal both in space and time, since the value of $(\partial_t-\Delta)^s u$ at a given point $(x,t)$ depends on the values of $u$ in the whole $\mathbb{R}^n$ and on  all the past time before $t$.
The singular integral in \eqref{nonlocaloper} is well defined provided
 $$u(x,t)\in C^{2s+\epsilon,s+\epsilon}_{x,\, t,\, {\rm loc}}(\mathbb{R}^n\times\mathbb{R}) \cap \mathcal{L}(\mathbb{R}^n\times\mathbb{R})$$
for some $\varepsilon\in (0,1)$, where
the slowly increasing function space $\mathcal{L}(\mathbb{R}^n\times\mathbb{R})$ is defined by
$$ \mathcal{L}(\mathbb{R}^n\times\mathbb{R}):=\left\{u(x,t) \in L^1_{\rm loc} (\mathbb{R}^n\times\mathbb{R}) \mid \int_{-\infty}^t \int_{\mathbb{R}^n} \frac{|u(x,\tau)|e^{-\frac{|x|^2}{4(t-\tau)}}}{1+(t-\tau)^{\frac{n}{2}+1+s}}\operatorname{d}\!x\operatorname{d}\!\tau<\infty,\,\, \forall \,t\in\mathbb{R}\right\},$$
and the definition of the local parabolic H\"{o}lder space $C^{2s+\epsilon,s+\epsilon}_{x,\, t,\, {\rm loc}}(\mathbb{R}^n\times\mathbb{R})$ will be specified in Section \ref{5}\,. Particularly, if the solution $u$ is bounded, we only need to assume that $u$ is parabolic H\"{o}lder continuous to compensate the singularity of the kernel at point $(x,t)$.
What makes this problem interesting is that, when the space-time nonlocal operator $(\partial_t-\Delta)^s$ is applied to a function that only depends on either space or time, it reduces  to a familiar fractional order operator (cf. \cite{ST}). More precisely, if $u$ is only a function of $x$, then
 \begin{equation*}
   (\partial_t-\Delta)^s u(x)=(-\Delta)^s u(x),
 \end{equation*}
where $(-\Delta)^s$ is the well-known fractional Laplacian.
In recent decades, the well-posedness of solutions to elliptic equations involving the fractional Laplace operator has been extensively investigated, interested readers can refer to \cite{CLL, CLM, CLZ1, CW, CZ, DLL, LLW, LZ} and references therein.
While if $u=u(t)$, then
 \begin{equation*}
   (\partial_t-\Delta)^s u(t)=\partial_t^s u(t),
 \end{equation*}
where $\partial_t^s$ is the Marchaud fractional derivative of order $s$. Note that the fractional powers of heat operator $(\partial_t-\Delta)^s$
can be reduced to the local heat operator $\partial_t-\Delta$ as $s\rightarrow 1$ (cf. \cite{FNW}). Our main result can thus be regarded as a nonlocal generalization
of Gibbons' conjecture for the local parabolic equation in the sense that $s\rightarrow 1$.

The space-time nonlocal equation represented by \eqref{model} arises in various physical and biological phenomena, such as anomalous diffusion \cite{KBS}, chaotic dynamics \cite{Z}, biological invasions \cite{BRR} and so on.
In applications within financial field, it can also be used to model the waiting time between transactions is correlated with the ensuring price jump (cf. \cite{RSM}). From a probabilistic point of view, the master equation is fundamental in the theory of continuous time random walk, where $u$ represents the distribution of particles whose random jumps occur simultaneously with random time lag (cf. \cite{MK}). It is in contrast to the nonlocal parabolic equations like \eqref{frac-para} or dual fractional parabolic equation
\begin{equation}\label{frac-para1}
  \partial_t^s u+(-\Delta)^s u=f,
\end{equation}
where jumps are independent of the waiting times. Such strong correlation can also be reflected mathematically by observing the initial conditions for classical maximum principles of the master equation in bounded domains, as described below.

{\em Let $\Omega$ be a bounded domain in $\mathbb{R}^n$ and $[t_1, t_2]$ be an interval in $\mathbb{R}$. Assume that $u(x,t)$ is a solution of initial exterior value problem
\begin{equation}\label{IMP1}
\left\{
\begin{array}{ll}
    (\partial_t-\Delta)^su(x,t)\geq 0 ,~   &(x,t) \in  \Omega\times(t_1,t_2]  , \\
  u(x,t)\geq 0 , ~ &(x,t)  \in  (\mathbb{R}^n\setminus\Omega)\times(t_1,t_2),\\
  u(x,t)\geq 0 , ~ &(x,t)  \in \mathbb{R}^n\times(-\infty,t_1].
\end{array}
\right.
\end{equation}
Then $u(x,t)\geq 0$ in $\Omega\times(t_1,t_2]$.}

Due to the nonlocal and strongly correlated nature of the fully fractional heat operator $(\partial_t-\Delta)^s$, in order to ensure the validity of the classical maximum principle, besides the exterior condition on
$(\mathbb{R}^n\setminus\Omega)\times(t_1,t_2),$
we must also require the initial condition $u(x,t)\geq 0$ to hold on $\mathbb{R}^n\times(-\infty,t_1]$, rather than just on $\Omega\times\{t_1\}$ or on $\Omega\times(-\infty,t_1]$ as required by the maximum principle for fractional reaction-diffusion equations \eqref{frac-para} and dual fractional equation \eqref{frac-para1}, respectively. These differences can be illuminated by the following counterexample.

Let $\Omega:=(-1,1)$. For simplicity, we consider functions in separated variables form, that is,
\begin{equation*}
  u(x,t):=X(x)T(t)
\end{equation*}
on the parabolic cylinder $\Omega \times (0,1].$

Here $X\in C^{1,1}([-1,1])$ is a function of $x$ that satisfies
\begin{equation}\label{X}
X(x)\in
\left\{
\begin{array}{ll}
    [-\varepsilon,0] ,~   & \mbox{in}\,\,[-1,1], \\
   (0,1), ~ & \mbox{in}\,\,(-2,-1)\cup(1,2),
\end{array}
\right.
\mbox{and}\,\, X(x)\equiv 1\,  \mbox{in}\,\,(-\infty,-2]\cup[2,+\infty),
\end{equation}
as illustrated in Figure 1 below.
\begin{center}
\begin{tikzpicture} [scale=2]
\draw [thick,->] (-2.7,0) -- (2.7,0) node [below right] {$x$};
\draw[thick,->] (0,-0.6) -- (0,1.3) node[left] {$X(x)$};
\path node at (-0.1,0.1) {$0$};
\draw[smooth,blue,very thick]
 plot coordinates {(-2,1) (-1.85,0.97) (-1.5,0.75)(-1,0) (-0.5, -0.11) (-0.1,-0.12) (0,-0.125) (0.1,-0.12) (0.5,-0.11) (1,0) (1.5,0.75) (1.85,0.97) (2,1) };
 \draw[very thick, blue] (2,1) -- (2.7,1);
  \draw[very thick, blue] (-2,1) -- (-2.7,1);
 \draw[fill=red] (1,0) circle (0.02);
 \draw[fill=red] (-1,0) circle (0.02);
  \draw[fill=red] (2,0) circle (0.02);
   \draw[fill=red] (-2,0) circle (0.02);
        \draw[fill=red] (0,-0.125) circle (0.02);
        \draw[fill=red] (0,1) circle (0.02);
  \node at (1,-0.15) {$1$};
  \node at (2,-0.15) {$2$};
  \node at (-1,-0.15) {$-1$};
  \node at (-2,-0.15) {$-2$};
   \node at (0.12,0.85) {$1$};
       \node at (0.15,-0.225) {$-\varepsilon$};
  \draw [very thick] [dashed] [red] (-2,0)--(-2,1);
  \draw [very thick] [dashed] [red] (2,0)--(2,1);
    \draw [very thick] [dashed] [red] (-2,1)--(2,1);
\node [below=0.3cm, align=flush center,text width=16cm] at (0,-0.6)
        {Figure 1. The shape of function $X(x)$. };
\end {tikzpicture}
\end{center}
And $T\in C^{1}([0,1])$ is a function of $t$ that fulfills
\begin{equation}\label{T}
T(t)\in\left\{
\begin{array}{ll}
    (0,\varepsilon] ,~   & \mbox{in}\,\,(\frac{1}{8},\frac{7}{8}), \\
   (-1,0) ,~   & \mbox{in}\,\,(-2,-1),
\end{array}
\right.
\mbox{and}\,\,
T(t)\equiv\left\{
\begin{array}{ll}
  0 , ~ &\mbox{in}\,\,[-1,\frac{1}{8}]\cup[\frac{7}{8},1],\\
 -1 , ~ &\mbox{in}\,\,(-\infty,-2],
\end{array}
\right.
\end{equation}
as shown in the following Figure 2.
\begin{center}
\begin{tikzpicture} [scale=2]
\draw [thick,->] (-2.7,0) -- (2.7,0) node [below right] {$t$};
\draw[thick,->] (0,-1.2) -- (0,0.6) node[left] {$T(t)$};
\path node at (-0.1,-0.1) {$0$};
\draw[very thick, blue] (0.875,0) -- (1,0);
\draw[smooth,blue,very thick]
 plot coordinates {(0.125,0) (0.35, 0.06) (0.45,0.12) (0.5,0.125) (0.55,0.12) (0.65,0.06) (0.875,0)};
 \draw[very thick, blue] (-1,0) -- (0.125,0);
 \draw[smooth,blue,very thick]
 plot coordinates {(-1,0) (-1.25,-0.1) (-1.5,-0.4) (-1.75,-0.9) (-2,-1)};
 \draw[very thick, blue] (-2,-1) -- (-2.6,-1);
 \draw[fill=red] (0.125,0) circle (0.02);
  \draw[fill=red] (0.875,0) circle (0.02);
  \draw[fill=red] (1,0) circle (0.02);
    \draw[fill=red] (-1,0) circle (0.02);
        \draw[fill=red] (-2,0) circle (0.02);
        \draw[fill=red] (0,0.125) circle (0.02);
         \draw[fill=red] (0,-1) circle (0.02);
  \node at (0.125,-0.2) {$\frac{1}{8}$};
  \node at (0.875,-0.2) {$\frac{7}{8}$};
   \node at (1,-0.2) {$1$};
      \node at (-1,-0.2) {$-1$};
         \node at (-2,0.15) {$-2$};
         \node at (-0.1,0.125) {$\varepsilon$};
       \node at (0.15,-1) {$-1$};
  \draw [very thick] [dashed] [red] (0,0.125)--(0.5,0.125);
    \draw [very thick] [dashed] [red] (0,-1)--(-2,-1);
        \draw [very thick] [dashed] [red] (-2,0)--(-2,-1);
\node [below=0.3cm, align=flush center,text width=16cm] at (0,-1.1)
        {Figure 2. The shape of function $T(t)$. };
\end {tikzpicture}
\end{center}
Let $s=\frac{1}{2}$ and $\varepsilon>0$ be a sufficiently small positive constant such that
\begin{equation*}
  (\partial_t-\Delta)^{\frac{1}{2}}u(x,t)\geq 0\,\,\mbox{in}\,\, (-1,1)\times(0,1].
\end{equation*}
Please refer to Section \ref{5} for detailed calculations. The
values taken by the function $u(x,t)$ imply that
\begin{equation} \label{A100}
\left\{
\begin{array}{ll}
    (\partial_t-\Delta)^{\frac{1}{2}}u(x,t)\geq 0 ,~   &(x,t) \in  \Omega\times(0,1], \\
   u(x,t)\geq 0 , ~ &(x,t)  \in  \Omega^c\times(0,1),\\
  u(x,t)\geq 0 , ~ &(x,t)  \in \Omega\times(-\infty,0],
 \end{array}
\right.
\end{equation}
however
\begin{equation} \label{A101}
  u(x,t)\leq 0  \,\,\mbox{for}\,\,(x,t)  \in \Omega^c\times(-\infty,0],
\end{equation}
as represented in Figure 3 below.

If the master operator $(\partial_t-\Delta)^{\frac{1}{2}}$ in (\ref{A100}) is replaced by a local parabolic operator $\frac{\partial}{\partial t} -\lap$, or a nonlocal parabolic operator $\frac{\partial}{\partial t} + (-\lap)^s$, or even a dual fractional operator $\partial_t^\alpha + (-\lap)^s$,
then by the maximum principles, we must have $$u(x,t) \geq 0, \;\;\; (x,t) \in \Omega \times (0,1].$$

Nonetheless, for problem (\ref{A100}) involving the master operator, it is evident that the initial condition $u(x,t)\geq 0$ just in $\Omega\times(-\infty,0]$ does not guarantee $u(x,t)$ to be nonnegative in $\Omega\times(0,1]$. One can easily see that the function $u(x,t) = X(x)T(t)$ so constructed is negative somewhere in $\Omega\times(0,1]$. The main problem lies in (\ref{A101}), the initial condition fail to satisfied in
$$ \Omega^c\times(-\infty,0].$$

\begin{center}
\begin{tikzpicture}[scale=2]
 \draw[blue!30,fill=blue!30] (1,0) rectangle (2,1);
  \draw[blue!30,fill=blue!30] (-1,0) rectangle (-2,1);
  \draw[yellow!30,fill=yellow!30] (-1,0) rectangle (1,-1.5);
  \draw[green!30,fill=green!30] (1,0) rectangle (2,-1.5);
  \draw[green!30,fill=green!30] (-1,0) rectangle (-2,-1.5);
\draw [thick]  [black] [->, thick](-2,0)--(2,0) node [anchor=north west] {$x$};
\draw [thick]  [black!80][->, thick] (0,-1.5)--(0,1.5) node [black][ above] {$t$};
\path node at (-0.1,-0.1) {$0$};
\draw [thick] [dashed] [blue] (-2,1)--(2,1);
\draw [thick] [dashed] [blue] (1,1)--(1,-1.5);
\draw [thick] [dashed] [blue] (-1,1)--(-1,-1.5);
\draw[fill=red] (1,0) circle (0.02);
\draw[fill=red] (0,1) circle (0.02);
\draw[fill=red] (-1,0) circle (0.02);
\node at (1.1,-0.2) {$1$};
\node at (0.1,1.12) {$1$};
\node at (-0.85,-0.2) {$-1$};
\path  (0.06,0.75) [purple][semithick] node [ font=\fontsize{10}{10}\selectfont] {$(\partial_t-\Delta)^{\frac{1}{2}}u(x,t)\geq 0$,};
\path  (0,0.5) [purple][semithick] node [ font=\fontsize{10}{10}\selectfont] {where there exist points};
\path  (0.07,0.25) [purple][semithick] node [ font=\fontsize{10}{10}\selectfont]  {such that $u(x,t)<0$};
\path  (1.5,0.5) [purple][semithick] node [ font=\fontsize{10}{10}\selectfont] {$u(x,t)\geq 0$};
\path  (-1.5,0.5) [purple][semithick] node [ font=\fontsize{10}{10}\selectfont] {$u(x,t)\geq 0$};
\path  (-0.1,-0.5) [purple][semithick] node [ font=\fontsize{10}{10}\selectfont] {$u(x,t)\geq 0$};
\path  (1.5,-0.5) [purple][semithick] node [ font=\fontsize{10}{10}\selectfont] {$u(x,t)\leq 0$};
\path  (-1.5,-0.5) [purple][semithick] node [ font=\fontsize{10}{10}\selectfont] {$u(x,t)\leq 0$};
\node [below=0.5cm, align=flush center,text width=12cm] at  (0,-1.5)
        {Figure 3. Distribution of values of function $u(x,t)=X(x)T(t)$. };
\end{tikzpicture}
\end{center}
The aforementioned counterexample shows that the initial condition on the whole $\mathbb{R}^n\times(-\infty,t_1]$ is necessary to ensure the validity of the maximum principle for parabolic equations involving the fully fractional heat operator $(\partial_t-\Delta)^s$.
Therefore, the strong correlation of master equations makes it more complicated to study compared to the parabolic equations \eqref{frac-para} and \eqref{frac-para1} that only possess nonlocal feature. For instance, please see the remark after Theorem \ref{weightAveIneq}.

The extensive practical applications highlight the significance of studying this kind of nonlocal equations in order to gain a deeper understanding of the underlying mechanisms behind various phenomena.
Substantial progress in the investigation of the existence, uniqueness and regularity of solutions to master equations has been achieved
in a series of remarkable papers \cite{ACM, CS2, ST}.
To the best of our knowledge, very little is known on the geometric behavior of solutions to master equation \eqref{model}. In the existing literature, the most common approach to studying master equation is the extension method, which extends such nonlocal equation to a local degenerate parabolic equation in a higher dimensional space. However, this method always requires cumbersome calculations and obscures the essence of the problem, and therefore may not necessarily yield the desired results. We overcome these difficulties by incorporating some new insights (which will be explained in detail later) into the sliding method to directly investigate the fully nonlocal operator $(\partial_t-\Delta)^s$. This direct method not only allows us to focus on the essential features of the nonlocal problem and avoid the complications that arise from the extension process, but also enables us to demonstrate the validity of the generalized version of Gibbons' conjecture in the context of master equation \eqref{model}, as pointed out in the following
\begin{theorem} \label{Mainresult}
Let
$$u(x,t)\in C^{2s+\epsilon,s+\epsilon}_{x,\, t,{\rm loc}}(\mathbb{R}^n\times\mathbb{R})$$
be a bounded solution of master equation
\begin{equation*}
  (\partial_t-\Delta)^su(x,t)= f(t,u(x,t)),\, \,\mbox{in}\,\, \mathbb{R}^n\times\mathbb{R},
\end{equation*}
satisfying
\begin{equation*}
\left\{\begin{array}{ll}
|u(x,t)|\leq1\,\, \mbox{for}\,\,(x,t) \in \mathbb{R}^n\times\mathbb{R},\\
\displaystyle\lim_{x_n\rightarrow\pm\infty}u(x',x_n,t) = \pm1, \,\,\mbox{uniformly for}\,\, x'=(x_1,...,x_{n-1})\in \mathbb{R}^{n-1} \,\, \mbox{and for}\,\, t\in\mathbb{R}.
\end{array}
\right.
\end{equation*}
Assume that $f(t,u)$ is continuous in $\mathbb{R}\times[-1,1]$, and for any fixed $t\in\mathbb{R}$,
\begin{equation*}
  f(t,u) \,\, \mbox{is non-increasing for} \,\, u\in[-1,-1+\delta]\cup [1-\delta,1]\,\, \mbox{with some}\,\, \delta>0.
\end{equation*}

 Then the entire solution $u(x,t)$ is strictly increasing with respect to $x_n$, and furthermore it depends only on $x_n$, that is, $$u(x',x_n,t)=u(x_n,t)$$ for any $t\in\mathbb{R}$.
\end{theorem}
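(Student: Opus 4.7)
The plan is to carry out the sliding method on the shifted family $u^\tau(x,t) := u(x',x_n+\tau,t)$, with the paper's weighted average inequality and unbounded-domain maximum principles as the two main black boxes. Writing $w^\tau := u^\tau - u$, the linearity of $(\partial_t-\Delta)^s$ gives
\begin{equation*}
(\partial_t - \Delta)^s w^\tau(x,t) = c^\tau(x,t)\, w^\tau(x,t),
\end{equation*}
where $c^\tau$ is the bounded difference quotient of $f$. The assumption that $f(t,\cdot)$ is non-increasing on $[-1,-1+\delta]\cup[1-\delta,1]$ guarantees $c^\tau \le 0$ on any region where $u$ and $u^\tau$ both lie in those end slabs, which is precisely the sign needed for the maximum principle.

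I would use uniform convergence to pick $M$ so large that $u(x,t)<-1+\delta/2$ when $x_n<-M$ and $u(x,t)>1-\delta/2$ when $x_n>M$, uniformly in $(x',t)$. For $\tau \ge 2M$, inside $\{|x_n|\le M\}$ the shifted function $u^\tau$ already exceeds $1-\delta/2$ while $u\le 1$, and outside this strip both $u$ and $u^\tau$ lie close to $\pm 1$, so $c^\tau\le 0$ there and the unbounded-domain maximum principle yields $w^\tau \ge 0$ everywhere. Setting
\begin{equation*}
\tau_0 := \inf\{\tilde\tau > 0 : w^\tau \ge 0 \text{ in } \mathbb{R}^n\times\mathbb{R} \text{ for all } \tau \ge \tilde\tau\},
\end{equation*}
the aim is $\tau_0 = 0$. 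If not, continuity gives $w^{\tau_0}\ge 0$; the strong maximum principle (extracted from the weighted average inequality) forces either $w^{\tau_0}\equiv 0$, which is ruled out by the uniform limits, or strict positivity $w^{\tau_0}>0$ on compact strips. A small perturbation $\tau_0-\varepsilon$ preserves positivity on $\{|x_n|\le M\}$, and the unbounded-domain maximum principle propagates it outward, contradicting the minimality of $\tau_0$. Hence $u^\tau \ge u$ for every $\tau>0$, and strict monotonicity in $x_n$ follows from another application of the strong maximum principle.

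One-dimensional symmetry is obtained by replaying the sliding in every direction $\nu$ with $\nu\cdot e_n > 0$: the assumed uniform convergence in $x_n$ transfers to uniform convergence along $\nu$ because the $n$-th coordinate still diverges, so the same argument yields $\partial_\nu u \ge 0$. Applying this to the pair $\nu_\pm := (e_n \pm \varepsilon e')/\sqrt{1+\varepsilon^2}$, where $e'\in\mathbb{R}^n$ is any unit vector with $e'\cdot e_n = 0$ and $\varepsilon>0$ is arbitrary, and combining the two sign conditions, one obtains $|\partial_{e'} u| \le \partial_{x_n} u/\varepsilon$; letting $\varepsilon\to\infty$ together with the local boundedness of $\partial_{x_n} u$ (from interior regularity for the master operator) forces $\partial_{e'} u\equiv 0$, hence $u(x',x_n,t) = u(x_n,t)$.

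The main obstacle will be the contradiction step within the sliding, that is, the passage from $\tau_0>0$ down to a smaller shift. Because of the strong space-time coupling of $(\partial_t-\Delta)^s$ that is highlighted by the counterexample in the introduction, the ``past'' initial condition on the full set $\mathbb{R}^n\times(-\infty,t_0]$ genuinely matters; one cannot localize the argument to a slab in time, nor replace the past datum by a purely spatial exterior condition. The weighted average inequality is tailor-made to handle exactly this: it compares $w^\tau(x,t)$ with a Gaussian-weighted past-average over the whole space, and only with that ingredient can one upgrade $w^{\tau_0}\ge 0$ to strict positivity on compacts, run the maximum principle in the complementary unbounded region where $c^\tau\le 0$, and finally close the perturbation argument.
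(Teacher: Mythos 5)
Your overall strategy is the paper's strategy (sliding in $x_n$, with the weighted average inequality and the unbounded-domain maximum principle as the two tools), but the crucial contradiction step at $\tau_0>0$ has a genuine gap. The sets on which you want strict positivity are not compact: they are slabs $\mathbb{R}^{n-1}\times[-a,a]\times\mathbb{R}$, unbounded in $x'$ and in $t$. Pointwise strict positivity of $w^{\tau_0}$ there (which a strong maximum principle can give) does not yield a positive infimum, and without the uniform bound $\sup_{(x,t)\in(\mathbb{R}^{n-1}\times[-a,a])\times\mathbb{R}}\bigl(u-u^{\tau_0}\bigr)<0$ (the paper's \eqref{Main6}) your ``small perturbation $\tau_0-\varepsilon$ preserves positivity on the strip'' step collapses. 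Establishing that uniform bound is the heart of the paper's Step 2 and is not ``extracted from the weighted average inequality'' in one stroke: one takes a sequence $(x^k,t_k)$ in the slab along which $w_{\lambda_0}\to0$, corrects by cut-off perturbations $\varepsilon_k\eta_k$, applies the weighted average inequality at the maxima of the perturbed functions, then translates, invokes Arzel\`a--Ascoli to produce a limit $u_\infty$ which is invariant under the shift by $\lambda_0 e_n$ along a chain of points, and contradicts the uniform limits $u\to\pm1$. This translation-compactness argument is missing from your outline; describing the strips as ``compact'' conceals exactly the difficulty it is designed to overcome.

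Two smaller points. First, in your opening step Theorem \ref{MPUB} cannot directly give ``$w^\tau\ge0$ everywhere'': it requires an exterior set $\Omega^c$ of positive relative density (condition \eqref{MPUB1}) on which the sign is already known, and for $w^\tau$ no such region is available a priori (for $x_n\ge M$ both $u$ and $u^\tau$ are near $1$, but their order is unknown). The paper repairs this by arguing by contradiction and subtracting half of the supremum, so that uniform convergence forces the shifted function to be nonpositive on $\{x_n\ge M\}$, which then plays the role of $\Omega^c$; a similar device, together with shrinking $\Omega$ to the bounded-width strips $a<|x_n|<M$ (since $\{|x_n|>a\}$ itself violates \eqref{MPUB1}), is needed again when you rerun the argument for $\tau$ slightly below $\tau_0$. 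Second, your route to one-dimensional symmetry via $|\partial_{e'}u|\le\partial_{x_n}u/\varepsilon$ presupposes that $\nabla_x u$ exists and is locally bounded, which does not follow from $u\in C^{2s+\epsilon,s+\epsilon}_{x,\,t,\,{\rm loc}}$ when $2s+\epsilon\le1$ and $f$ is merely continuous; the paper's derivative-free argument (slide along every $\nu$ with $\nu_n>0$, let $\nu_n\to0$, and use the two opposite inequalities for $\pm\nu'$) avoids this. Likewise, writing the equation for $w^\tau$ with a ``bounded difference quotient'' $c^\tau$ is unjustified for merely continuous $f$, though harmless since only the sign of $f(t,u^\tau)-f(t,u)$ in the end slabs is ever used.
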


\begin{remark}
Our result applies to a wide range of  more general nonlinear functions $f$, which always contains the De Giorgi-type nonlinearities $f=u-u^3$ as a special example. We would like to mention that Theorem \ref{Mainresult} is the first result that establishes the Gibbons' conjecture for master equations. This work will project new insights and perspectives into the proof of such important conjecture.
\end{remark}
Specifically, in order to effectively perform the direct sliding method, we first establish a generalized weighted average inequality for the fully fractional heat operator $(\partial_t-\Delta)^s$ as follows.
\begin{theorem}\label{weightAveIneq}
Let $$u(x,t)\in C^{2s+\epsilon,s+\epsilon}_{x,\, t,\, {\rm loc}}(\mathbb{R}^n\times\mathbb{R}) \cap \mathcal{L}(\mathbb{R}^n\times\mathbb{R}) \,.$$ If $u(x,t)$ attains its maximum at a point $(x^0,t_0)\in \mathbb{R}^n\times (-\infty,t_0]$, then there holds that
\begin{equation}\label{WAI}
  u(x^0,t_0) \leq \frac{C_0}{C_{n,s}}r^{2s}(\partial_t-\Delta)^su(x^0,t_0)+C_0r^{2s}\int_{-\infty}^{t_0-r^2}\int_{B^c_{r}(x^0)}
  \frac{u(y,\tau)e^{-\frac{|x^0-y|^2}{4(t_0-\tau)}}}{(t_0-\tau)^{\frac{n}{2}+1+s}}\operatorname{d}\!y\operatorname{d}\!\tau
\end{equation}
for any radius $r>0$, where the positive constant
$$C_0:=\frac{1}{\int_{-\infty}^{-1}\int_{B^c_{1}(0)}\frac{e^{\frac{|y|^2}{4\tau}}}{(-\tau)^{\frac{n}{2}+1+s}}\operatorname{d}\!y\operatorname{d}\!\tau},$$ and
\begin{equation}\label{measure}
  C_0r^{2s}\int_{-\infty}^{t_0-r^2}\int_{B^c_{r}(x^0)}
  \frac{e^{-\frac{|x^0-y|^2}{4(t_0-\tau)}}}{(t_0-\tau)^{\frac{n}{2}+1+s}}\operatorname{d}\!y\operatorname{d}\!\tau
=1.
\end{equation}
\end{theorem}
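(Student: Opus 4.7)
The plan is to split the defining integral for $(\partial_t-\Delta)^s u(x^0,t_0)$ into a ``near'' piece and a ``far'' piece, discard the near piece using the hypothesis that $(x^0,t_0)$ is a global maximum, and normalize the kernel in the remaining far piece by a parabolic scaling.

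Concretely, first I would introduce the parabolic exterior region
$$E_r:=(-\infty,t_0-r^2)\times B_r^c(x^0)$$
and write
$$(\partial_t-\Delta)^s u(x^0,t_0)=C_{n,s}\bigl(I_{\mathrm{in}}(r)+I_{\mathrm{out}}(r)\bigr),$$
where $I_{\mathrm{out}}(r)$ integrates the defining kernel against $u(x^0,t_0)-u(y,\tau)$ over $E_r$ and $I_{\mathrm{in}}(r)$ does so over the complementary subset of $(-\infty,t_0)\times\mathbb{R}^n$. Because $(x^0,t_0)$ realizes the maximum of $u$ on $\mathbb{R}^n\times(-\infty,t_0]$, the integrand is pointwise non-negative wherever it is classically defined, so $I_{\mathrm{in}}(r)\ge 0$ and therefore
$$(\partial_t-\Delta)^s u(x^0,t_0)\ge C_{n,s}\,I_{\mathrm{out}}(r).$$

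Next I would split $I_{\mathrm{out}}(r)$ by linearity into a term proportional to $u(x^0,t_0)$ times the bare kernel integral over $E_r$, minus the weighted integral of $u(y,\tau)$ over $E_r$. A routine dilation $y=x^0+rz$, $\tau=t_0-r^2\sigma$ respects the natural scaling of the fractional heat kernel and converts the bare kernel integral to $1/(C_0 r^{2s})$; this both verifies the normalization identity \eqref{measure} and exposes the factor of $u(x^0,t_0)$. Substituting this back, isolating $u(x^0,t_0)$, and multiplying through by $C_0 r^{2s}/C_{n,s}$ yields \eqref{WAI}.

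The only delicate point is justifying $I_{\mathrm{in}}(r)\ge 0$: the inner $y$-integral is taken as a Cauchy principal value near $y=x^0$, but the assumed $C^{2s+\epsilon,s+\epsilon}_{x,t,\mathrm{loc}}$ regularity controls the local singularity and preserves non-negativity in the PV limit. Absolute convergence of $I_{\mathrm{out}}(r)$ is guaranteed by the membership $u\in\mathcal{L}(\mathbb{R}^n\times\mathbb{R})$, so the rearrangement in the last step is legitimate. Beyond this, there is no serious analytic obstacle; the real content of the statement is the geometric choice of the parabolic cutoff $E_r$, pairing spatial radius $r$ with temporal lag $r^2$. This pairing is exactly what allows the two scales to collapse into a single $r^{2s}$ weight, and is what makes the resulting inequality serve as an effective parabolic mean-value substitute in the subsequent maximum-principle and sliding-method arguments.
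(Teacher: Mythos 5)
Your proposal is correct and follows essentially the same route as the paper: drop the nonnegative ``near'' part of the defining integral using the maximality of $u$ at $(x^0,t_0)$ over $\mathbb{R}^n\times(-\infty,t_0]$, then split the remaining integral over $B_r^c(x^0)\times(-\infty,t_0-r^2)$ by linearity and use the parabolic rescaling $y=x^0+rz$, $\tau=t_0-r^2\sigma$ to identify the bare kernel integral with $1/(C_0 r^{2s})$, which gives both \eqref{measure} and, after rearrangement, \eqref{WAI}. Your extra remarks on the principal value and on absolute convergence via $u\in\mathcal{L}(\mathbb{R}^n\times\mathbb{R})$ are consistent with what the paper tacitly assumes.
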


\begin{remark}
Due to the strong correlation of the operator $(\partial_t-\Delta)^s$, there are three major differences between this weighted average inequality and that in \cite{ChenWu}:

(i) The kernels.

(ii) Our inequality here can only be established at the points where $x$ and $t$ simultaneously reach the maximum,
as compared to the one for fractional parabolic equations \eqref{frac-para}, where it is sufficient to obtain the inequality at the maximum point with respect to $x$ for each fixed $t$ (cf. \cite{ChenWu}).

(iii) On the right hand side of the inequality, besides the integral with respect to $x$ on $B^c_{r}(x^0)$, there is another layer of integral with respect to $t$ from $-\infty$ to $t_0 -r^2$.
\end{remark}

\begin{remark}
If $u$ satisfies
\begin{equation*}
   (\partial_t-\Delta)^su(x,t)\leq 0
\end{equation*}
then at the maximum point $(x^0, t_0)$,  the key estimate established in Theorem \ref{weightAveIneq} can be simplified as follows
\begin{equation}\label{WAIS}
  u(x^0,t_0) \leq \int_{-\infty}^{t_0-r^2}\int_{B^c_{r}(x^0)}
  u(y,\tau)\operatorname{d}\!\mu_r(y,\tau),
\end{equation}
where we denote
\begin{equation*}
  \int_{-\infty}^{t_0-r^2}\int_{B^c_{r}(x^0)}
  1\operatorname{d}\!\mu_r(y,\tau):=C_0r^{2s}\int_{-\infty}^{t_0-r^2}\int_{B^c_{r}(x^0)}
  \frac{e^{-\frac{|x^0-y|^2}{4(t_0-\tau)}}}{(t_0-\tau)^{\frac{n}{2}+1+s}}\operatorname{d}\!y\operatorname{d}\!\tau
=1.
\end{equation*}
Obviously, inequality \eqref{WAIS} implies that the maximum value $u(x^0,t_0)$ can be controlled by the weighted average value of $u(x,t)$ over $B^c_{r}(x^0)\times(-\infty,t_0-r^2)$.
\end{remark}

 As a consequence of this weighted average inequality (\ref{WAIS}), we can immediately derive the maximum principle for master problems in bounded domains.
\begin{corollary}\label{MPBD}
Let $\Omega$ be a bounded domain in $\mathbb{R}^n$ and $t_1<t_2$ be two real numbers.
Suppose that
$$u(x,t)\in C^{2s+\epsilon,s+\epsilon}_{x,\, t,\,{\rm loc}}(\Omega\times(t_1,t_2])\cap \mathcal{L}(\mathbb{R}^n\times\mathbb{R})$$
is an upper semi-continuous function on $\overline{\Omega}\times[t_1,t_2]$, satisfying
\begin{equation}\label{MP1}
\left\{
\begin{array}{ll}
    (\partial_t-\Delta)^su(x,t)\leq 0 ,~   &(x,t) \in  \Omega\times(t_1,t_2]  , \\
  u(x,t)\leq 0 , ~ &(x,t)  \in \mathbb{R}^n\times(-\infty,t_1]  ,\\
  u(x,t)\leq 0 , ~ &(x,t)  \in  (\mathbb{R}^n\setminus\Omega)\times(t_1,t_2),
\end{array}
\right.
\end{equation}
then $u(x,t)\leq 0$ in $\Omega\times(t_1,t_2]$.
\end{corollary}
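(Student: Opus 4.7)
The plan is to argue by contradiction using the simplified weighted average inequality \eqref{WAIS} from Theorem \ref{weightAveIneq}. Suppose the conclusion fails, so that
$$M := \sup_{\overline{\Omega}\times[t_1,t_2]} u > 0.$$
Since $u$ is upper semi-continuous on the compact set $\overline{\Omega}\times[t_1,t_2]$, this supremum is attained at some point $(x^0,t_0)$. I would first localize this point to the parabolic interior: because $\Omega$ is open, $\partial\Omega\subset\mathbb{R}^n\setminus\Omega$, so the exterior condition in \eqref{MP1} forces $u\leq 0$ on $\partial\Omega\times(t_1,t_2)$, while the initial condition forces $u\leq 0$ on $\mathbb{R}^n\times(-\infty,t_1]$. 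Hence the point $(x^0,t_0)$ where $u=M>0$ must lie in $\Omega\times(t_1,t_2]$.

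Next, I plan to upgrade $(x^0,t_0)$ to a global maximum of $u$ over all of $\mathbb{R}^n\times(-\infty,t_0]$: outside of $\overline{\Omega}\times[t_1,t_0]$ the hypotheses already give $u\leq 0<M$, and inside this set $u\leq M$ by definition of $M$. This puts us squarely in the setting of Theorem \ref{weightAveIneq}. Because $(\partial_t-\Delta)^s u(x^0,t_0)\leq 0$ by hypothesis, the first term on the right-hand side of \eqref{WAI} is non-positive and the estimate collapses to \eqref{WAIS}:
$$M = u(x^0,t_0) \leq \int_{-\infty}^{t_0-r^2}\int_{B_r^c(x^0)} u(y,\tau)\, d\mu_r(y,\tau)$$
for every $r>0$.

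I would then fix some $r>0$ small enough that both $B_r(x^0)\subset\Omega$ and $t_0-r^2>t_1$; both are possible because $(x^0,t_0)$ is a parabolic interior point. On the full integration domain one has $u\leq M$ pointwise, but on the subregion $(\mathbb{R}^n\setminus\Omega)\times(t_1,t_0-r^2]$ one in fact has $u\leq 0$. Because the Gaussian kernel defining $d\mu_r$ is strictly positive and $\mathbb{R}^n\setminus\Omega$ has infinite Lebesgue measure (as $\Omega$ is bounded), this subregion carries strictly positive $\mu_r$-mass. Combined with the probability normalization \eqref{measure}, this yields
$$\int_{-\infty}^{t_0-r^2}\int_{B_r^c(x^0)} u\, d\mu_r < M,$$
contradicting the previous display.

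The main delicacy is not analytic but structural: one must verify that the maximum is genuinely attained at a parabolic interior point, and that the subset where $u\leq 0$ carries positive weight with respect to $d\mu_r$. Both are routine consequences of upper semi-continuity on the compact parabolic cylinder $\overline{\Omega}\times[t_1,t_2]$ together with the strict positivity of the Gaussian kernel and the boundedness of $\Omega$. Once Theorem \ref{weightAveIneq} is in hand, the deduction of Corollary \ref{MPBD} is essentially immediate; the strong correlation between space and time is absorbed entirely into the weighted average inequality.
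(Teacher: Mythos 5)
Your argument is correct and follows essentially the same route as the paper: argue by contradiction, observe that the positive maximum of the upper semi-continuous function $u$ over the compact cylinder is attained at a point $(x^0,t_0)\in\Omega\times(t_1,t_2]$ which, by the initial and exterior conditions, is a global maximum over $\mathbb{R}^n\times(-\infty,t_0]$, and then apply the weighted average inequality \eqref{WAIS} at that point. The only difference is the choice of radius: the paper takes $r$ large enough that $\Omega\subset B_r(x^0)$, so the whole weighted average ranges over the region where the hypotheses give $u\le 0$ and one gets $u(x^0,t_0)\le 0$ at once, whereas you take $r$ small and conclude via the (equally easy) observation that the set where $u\le 0$ carries strictly positive $\mu_r$-mass, forcing the average to be strictly below $M$.
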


To prove this maximum principle, we argue by contradiction. Suppose $u$ is positive somewhere in $\Omega\times(t_1,t_2]$, then it attains its
positive maximum at some point $(x^0, t_0)$ in this parabolic cylinder. Let $r$ be sufficiently large so that $\Omega \subset B_r(x^0)$, then
applying weighted average inequality (\ref{WAIS}) and combining with the interior and exterior conditions, we arrive at $u(x^0, t_0) \leq 0$, an obvious contradiction.
\medskip

We emphasize that inequality \eqref{WAI} also plays an important role in establishing the following maximum principle in unbounded domains for master equations, which is a crucial ingredient to carry out the direct sliding method.
\begin{theorem}\label{MPUB}
Let $\Omega\subset\mathbb{R}^n$ be an open set, possibly unbounded and disconnected, and there exists a uniformly positive constant $\bar{C}$ independent of the point $x$ such that
the limit $\displaystyle\lim_{R\rightarrow +\infty}\frac{|B_R(x)\cap \Omega^c|}{|B_R(x)|}$ exists and satisfies
\begin{equation}\label{MPUB1}
  \lim_{R\rightarrow +\infty}\frac{|B_R(x)\cap \Omega^c|}{|B_R(x)|}\geq\bar{C}>0\,\,\mbox{for any}\,\, x\in\Omega.
\end{equation}
Suppose that the upper semi-continuous function (up to the boundary $\partial \Omega$)
 $$u(x,t)\in C^{2s+\epsilon,s+\epsilon}_{x,\, t,\, {\rm loc}}(\Omega\times\mathbb{R}) \cap \mathcal{L}(\mathbb{R}^n\times\mathbb{R})$$
is bounded from above in $\Omega\times\mathbb{R}$, and satisfies
\begin{equation}\label{MPUB2}
\left\{
\begin{array}{ll}
    (\partial_t-\Delta)^su(x,t)\leq 0 ,~   & \mbox{at the points in}\,\, \Omega\times\mathbb{R} \,\,\mbox{where}\,\, u(x,t)>0 , \\
  u(x,t)\leq 0 , ~ &\mbox{in}\,\, \Omega^c \times\mathbb{R}.
\end{array}
\right.
\end{equation}
Then there holds that
\begin{equation}\label{MPUB3}
u(x,t)\leq0 \,\,\mbox{in} \,\,  \Omega\times\mathbb{R}.
\end{equation}
\end{theorem}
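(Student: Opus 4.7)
I would argue by contradiction, reducing the theorem to a single application of the weighted average inequality \eqref{WAIS}. Suppose $M:=\sup_{\Omega\times\mathbb{R}} u > 0$ and pick a sequence $(x^k,t_k)\in\Omega\times\mathbb{R}$ with $u(x^k,t_k)\to M$. The first step is a translation-and-compactness argument: set $u^k(x,t):=u(x+x^k,t+t_k)$ and $\Omega_k:=\Omega-x^k$. Uniform interior parabolic H\"older bounds for the family $\{u^k\}$ together with Arzel\`a--Ascoli should yield, along a subsequence, a locally uniform limit $u^\infty\in C^{2s+\epsilon,s+\epsilon}_{x,t,\mathrm{loc}}\cap\mathcal{L}(\mathbb{R}^n\times\mathbb{R})$ with $u^\infty(0,0)=M$, $u^\infty\leq M$ on $\mathbb{R}^n\times\mathbb{R}$, and (up to a further subsequence) $\chi_{\Omega_k^c}\to\chi_{(\Omega^\infty)^c}$ in $L^1_{\mathrm{loc}}$. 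Passing \eqref{MPUB2} to the limit by dominated convergence against the Gaussian kernel gives $u^\infty\leq 0$ on $(\Omega^\infty)^c\times\mathbb{R}$ and $(\partial_t-\Delta)^s u^\infty(0,0)\leq 0$. Moreover, the uniformity of $\bar C$ in \eqref{MPUB1} transfers via Fatou's lemma to give $\liminf_{R\to\infty}|B_R(0)\cap(\Omega^\infty)^c|/|B_R(0)|\geq\bar C$.

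Since $u^\infty(0,0)=M>0$ while $u^\infty\leq 0$ on $(\Omega^\infty)^c$, the origin must lie in $\Omega^\infty$, and $u^\infty$ attains its global maximum there. The simplified inequality \eqref{WAIS} then yields, for every $r>0$,
\begin{equation*}
M=u^\infty(0,0)\leq\int_{-\infty}^{-r^2}\!\int_{B^c_r(0)}u^\infty(y,\tau)\,d\mu_r(y,\tau).
\end{equation*}
Splitting $B^c_r(0)\times(-\infty,-r^2)$ into its intersection with $\Omega^\infty$ (where $u^\infty\leq M$) and with $(\Omega^\infty)^c$ (where $u^\infty\leq 0$), and using that $d\mu_r$ is a probability measure with strictly positive continuous density, I obtain
\begin{equation*}
M\leq M\Bigl(1-\mu_r\bigl((B^c_r(0)\cap(\Omega^\infty)^c)\times(-\infty,-r^2)\bigr)\Bigr).
\end{equation*}
The inherited density bound forces $|B^c_r(0)\cap(\Omega^\infty)^c|=\infty$ for every $r>0$, so the subtracted $\mu_r$-measure is strictly positive, yielding $M\leq 0$ and a contradiction.

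The main technical obstacle is the compactness step, not the algebraic chain above. Because the kernel in \eqref{nonlocaloper} integrates data over the entire past $(-\infty,t]$ of all of $\mathbb{R}^n$, legitimately taking $(\partial_t-\Delta)^s u^k\to(\partial_t-\Delta)^s u^\infty$ requires simultaneous control of long-time and long-distance tails, for which one must exploit dominated convergence against the Gaussian kernel together with the uniform boundedness of $u$. A related subtlety is propagating the density hypothesis \eqref{MPUB1} to the limit set $\Omega^\infty$, where the independence of $\bar C$ from $x$ is essential; without this uniformity, the limit complement could be too thin to close the contradiction.
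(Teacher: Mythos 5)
Your overall algebraic chain (average inequality at a maximum point, split into $\Omega$ and $\Omega^c$, use the positive density of the complement) is the same engine the paper uses, but the route you take to produce a point where the maximum is \emph{attained} — translation plus Arzel\`a--Ascoli plus a limiting configuration $(u^\infty,\Omega^\infty)$ — has two genuine gaps, and the second one is not merely technical. First, the compactness step has no basis in the hypotheses: $u$ is only assumed to lie in $C^{2s+\epsilon,s+\epsilon}_{x,\,t,\,\mathrm{loc}}$, to be upper semi-continuous up to $\partial\Omega$, and to satisfy the differential \emph{inequality} only at points where $u>0$; there is no equation from which uniform interior H\"older estimates for the translates $u^k$ could be extracted, so equicontinuity (and hence the existence of $u^\infty$ with $u^\infty(0,0)=M$) is unavailable. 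Second, and more seriously, the density hypothesis \eqref{MPUB1} does not pass to the limit set: it controls only the limit as $R\to\infty$ at each point, with the bound $\bar C$ uniform in $x$ but with no uniform rate. For a fixed radius $R$, the ratio $|B_R(x^k)\cap\Omega^c|/|B_R(x^k)|$ may tend to $0$ as $k\to\infty$ (the complement near $x^k$ may only become visible at scales $R_k\to\infty$), in which case your $\chi_{\Omega_k^c}$ converge to $0$ in $L^1_{\mathrm{loc}}$, $(\Omega^\infty)^c$ is a null set, the $\mu_r$-measure you subtract vanishes, and the final contradiction evaporates. Fatou's lemma does not rescue this, because the inequality it provides goes in the wrong direction relative to the interchange of the limits $k\to\infty$ and $R\to\infty$.

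The paper avoids both problems by never passing to a limit configuration. It perturbs: with $\varepsilon_k:=A-u(x^k,t_k)$ it sets $v_k:=u+\varepsilon_k\eta_k$, where $\eta_k$ is a cut-off supported in a parabolic cylinder of spatial radius $r=R_k/\sqrt[n]{2}$, and $R_k$ is chosen, \emph{for each $k$}, large enough that the complement has density at least $\bar C/4$ in $B_{3R_k}(x^k)$. Then $v_k$ attains a genuine maximum at some $(\bar x^k,\bar t_k)$ where $u>0$, Corollary \ref{coro1} gives $(\partial_t-\Delta)^s v_k(\bar x^k,\bar t_k)\leq C\varepsilon_k/r^{2s}$, and the full inequality \eqref{WAI} of Theorem \ref{weightAveIneq} (not the simplified \eqref{WAIS}, since the operator is only almost nonpositive) yields the quantitative bound $A\leq C\varepsilon_k+(1-C)(A+\varepsilon_k)$, a contradiction for large $k$. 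Because the radius is tied to $x^k$ before any limit is taken, only the pointwise (non-uniform-in-rate) density condition is ever needed, and no regularity beyond what is assumed is used. If you want to salvage your structure, you would have to replace the compactness step by exactly this kind of $\varepsilon_k$-perturbation at the almost-maximizing points and let the averaging radius depend on $k$.
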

\begin{remark}
Roughly speaking, condition \eqref{MPUB1} indicates that the ``size'' of the complement of $\Omega$ is ``not too small" as compared to the ``size'' of $\Omega$ as measured by limit of the ratio. For instance, this condition is satisfied when $\Omega$ is a half space, a stripe, an Archimedean spiral and so on. However, condition \eqref{MPUB1} is not fulfilled if,  for example
$$\Omega:=\{x=(x',x_n)\mid x'\in\mathbb{R}^{n-1},\,x_n>1 \,\mbox{or}\, x_n<-1\}.$$
Although its complement $\Omega^c$ is an infinite slab  with infinite measure, it is still
''much too small'' as compared with the ``size'' of $\Omega$ in the sense of limit defined in condition \eqref{MPUB1}.  More precisely, in this case, we have
$$\lim_{R\rightarrow +\infty}\frac{|B_R(x)\cap\Omega^c|}{|B_R(x)|} = 0.$$
\end{remark}

\begin{remark}
Note that the condition ``$u$ is bounded from above" is necessary to guarantee the validity of the above maximum principle, as shown in the following counterexample.

For simplicity, we consider functions of $x$ only. Let $$u(x,t)=u(x):=(x_n)_+^s$$ for $x\in \mathbb{R}^n$, then it is well known that $u(x)$ is a solution of the problem
\begin{equation*}
\left\{\begin{array}{r@{\ \ }c@{\ \ }ll}
&&\left(\partial_t-\Delta\right)^{s}u(x)=\left(-\Delta\right)^{s}u(x)=0, & \ \ x\in\mathbb{R}^n_+\,, \\[0.05cm]
&&u(x)= 0, & \ \ x\in\mathbb{R}^n\setminus\mathbb{R}^n_+\,.
\end{array}\right.
\end{equation*}
However, it is evident that $u(x) > 0$ in $\mathbb{R}^n_+$, which violates the conclusion of Theorem \ref{MPUB} with $\Omega=\mathbb{R}^n_+$.
\end{remark}

In addition to performing the sliding method to prove the Gibbons' conjecture  stated in Theorem \ref{Mainresult}\,, the aforementioned maximum principle can also be directly applied to establish the monotonicity of solutions to master equations on an upper half space.
\begin{corollary}\label{Application}
Let
$\mathbb{R}^n_+:=\{x\in\mathbb{R}^n\mid x_n>0\}$
be an upper half space, and
$$u(x,t)\in C^{2s+\epsilon,s+\epsilon}_{x,\, t,\, {\rm loc}}(\mathbb{R}^n_+\times\mathbb{R})$$
be a bounded solution of
\begin{equation}\label{AP1}
\left\{
\begin{array}{ll}
    (\partial_t-\Delta)^su(x,t)= f(t,u(x,t)),~   & \mbox{in}\,\, \mathbb{R}^n_+\times\mathbb{R}, \\
  u(x,t)> 0 , ~ &\mbox{in}\,\, \mathbb{R}^n_+\times\mathbb{R},\\
  u(x,t)= 0 , ~ &\mbox{in}\,\, (\mathbb{R}^n\setminus\mathbb{R}^n_+)\times\mathbb{R},
\end{array}
\right.
\end{equation}
where $u$ is continuous up to the boundary $\partial\mathbb{R}^n_+$, and the nonhomogeneous term $f(t,u)$ is monotonically decreasing with respect to $u$. Then $u(x,t)$ is strictly increasing with respect to $x_n$ in $\mathbb{R}^n_+$ for any $t\in\mathbb{R}$\,.
\end{corollary}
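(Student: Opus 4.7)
The plan is to carry out the sliding method in the $x_n$-direction, exploiting the translation invariance of $(\partial_t-\Delta)^s$ in the spatial variable together with the unbounded-domain maximum principle of Theorem \ref{MPUB}. For each $h>0$, introduce the shifted function
\begin{equation*}
u^h(x,t):=u(x+h e_n,t),\qquad w_h(x,t):=u(x,t)-u^h(x,t),
\end{equation*}
where $e_n=(0,\dots,0,1)$. Because the operator $(\partial_t-\Delta)^s$ commutes with translations in $x$ and $f$ depends only on $(t,u)$, the function $u^h$ again solves the master equation. Consequently, at any point where $w_h>0$ one has $u>u^h$, and the monotonicity of $f$ in $u$ yields
\begin{equation*}
(\partial_t-\Delta)^s w_h(x,t)=f(t,u(x,t))-f(t,u^h(x,t))\leq 0.
\end{equation*}

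Next, I would verify the hypotheses of Theorem \ref{MPUB} on $\Omega=\mathbb{R}^n_+$. The exterior condition $w_h\leq 0$ on $(\mathbb{R}^n_+)^c\times\mathbb{R}$ is immediate from \eqref{AP1}: when $x_n\leq 0$ we have $u(x,t)=0$ while $u^h(x,t)=u(x+h e_n,t)\geq 0$. The density condition \eqref{MPUB1} holds for a half-space with $\bar C=1/2$, since for every $x\in\mathbb{R}^n_+$ one has $\lim_{R\to\infty}|B_R(x)\cap\{y_n\leq 0\}|/|B_R(x)|=1/2$. Boundedness of $w_h$ is inherited from boundedness of $u$. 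Theorem \ref{MPUB} therefore delivers the non-strict monotonicity
\begin{equation*}
w_h(x,t)\leq 0\quad\text{in }\mathbb{R}^n_+\times\mathbb{R},\qquad\forall\,h>0.
\end{equation*}

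To upgrade this to strict monotonicity I would argue by contradiction. Suppose $w_h(x^0,t_0)=0$ at some $(x^0,t_0)\in\mathbb{R}^n_+\times\mathbb{R}$. Then $(x^0,t_0)$ is a global maximum of $w_h$, so that the pointwise expansion of \eqref{nonlocaloper} at this point becomes
\begin{equation*}
(\partial_t-\Delta)^s w_h(x^0,t_0)=C_{n,s}\int_{-\infty}^{t_0}\int_{\mathbb{R}^n}\frac{-w_h(y,\tau)}{(t_0-\tau)^{\frac{n}{2}+1+s}}e^{-\frac{|x^0-y|^2}{4(t_0-\tau)}}\operatorname{d}\!y\operatorname{d}\!\tau.
\end{equation*}
The integrand is pointwise nonnegative and strictly positive on the slab $\{(y,\tau):y_n\in(-h,0],\,\tau<t_0\}$, because there $u(y,\tau)=0$ while $u^h(y,\tau)=u(y+h e_n,\tau)>0$ by the strict positivity of $u$ in $\mathbb{R}^n_+$; this slab has positive Lebesgue measure in $\mathbb{R}^n\times(-\infty,t_0)$, and hence the integral is strictly positive. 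On the other hand, reading the value of $(\partial_t-\Delta)^s w_h(x^0,t_0)$ off the master equation and using $u(x^0,t_0)=u^h(x^0,t_0)$ gives the value $f(t_0,u(x^0,t_0))-f(t_0,u^h(x^0,t_0))=0$, a contradiction. Therefore $w_h<0$ strictly on $\mathbb{R}^n_+\times\mathbb{R}$ for every $h>0$, which is exactly the strict monotonicity of $u$ in $x_n$.

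I expect the delicate part to be the passage from $w_h\leq 0$ to the strict inequality $w_h<0$, rather than the non-strict bound itself. Applying Theorem \ref{MPUB} is routine once the half-space density condition is checked, but the strict monotonicity relies crucially on the full spatial-temporal nonlocality of $(\partial_t-\Delta)^s$: one must identify an exterior slab on which $w_h$ is strictly negative and on which the backward heat kernel centered at $(x^0,t_0)$ assigns positive weight. This is precisely where the strong space-time coupling of the master operator, which earlier forced the exterior initial data to be prescribed on all of $\mathbb{R}^n\times(-\infty,t_1]$, now works in our favor by providing enough nonlocal contribution from below the boundary to close the contradiction.
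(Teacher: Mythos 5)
Your proposal is correct and follows essentially the same route as the paper: slide in the $e_n$-direction, verify the differential inequality (from the monotonicity of $f$) and the exterior condition, apply Theorem \ref{MPUB} on $\Omega=\mathbb{R}^n_+$ to get $w_h\leq 0$, and then rule out an interior touching point via the integral representation at a global maximum. Your strictness step, which notes directly that the integrand is strictly positive on the slab $\{-h<y_n\leq 0\}$, is just a slight rephrasing of the paper's conclusion that the vanishing integral would force $w_{\lambda_0}\equiv 0$, contradicting the exterior condition and the interior positivity of $u$.
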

It is significant to mention that the holistic approach developed in this paper is very general and can be applied to investigate various qualitative properties of solutions for a wide range of fractional elliptic and parabolic equations and systems.

The remaining part of this paper will proceed as follows.
Section \ref{5} consists of the definition of parabolic H\"{o}lder space, the detailed calculation of the above counterexample and some frequently used estimates in what follows.
In Section \ref{2}\,, we derive a weighted average inequality applicable to the fully fractional heat operator $(\partial_t-\Delta)^s$, which is a key estimate for establishing the subsequent results. On this basis, Section \ref{3} is devoted to obtaining the maximum principle in unbounded domains and its straightforward applications.
Such a maximum principle plays an essential role in implementing the sliding method. Incorporating the aforementioned average inequality and the maximum principle into the sliding method, we complete the proof of Gibbons' conjecture for master equation in the last section.

\section{Preliminaries}\label{5}

In this section, we collect definitions and derive auxiliary results that are needed in establishing our main theorems. Throughout this paper, $C$ will denote a positive constant whose value may be different from line to line.

We start by providing the definition of parabolic H\"{o}lder space
$$C^{2\alpha,\alpha}_{x,\, t}(\mathbb{R}^n\times\mathbb{R}),$$
which plays an essential role in ensuring  that the fully fractional heat operator $(\partial_t-\Delta)^s$ is well-defined (cf. \cite{Kry}). More precisely,
\begin{itemize}
\item[(i)]
When $0<\alpha\leq\frac{1}{2}$, if
$u(x,t)\in C^{2\alpha,\alpha}_{x,\, t}(\mathbb{R}^n\times\mathbb{R})$, then there exists a constant $C>0$ such that
\begin{equation*}
  |u(x,t)-u(y,\tau)|\leq C\left(|x-y|+|t-\tau|^{\frac{1}{2}}\right)^{2\alpha}
\end{equation*}
for any $x,\,y\in\mathbb{R}^n$ and $t,\,\tau\in \mathbb{R}$.
\item[(ii)]
When $\frac{1}{2}<\alpha\leq1$, we say that
$$u(x,t)\in C^{2\alpha,\alpha}_{x,\, t}(\mathbb{R}^n\times\mathbb{R}):=C^{1+(2\alpha-1),\alpha}_{x,\, t}(\mathbb{R}^n\times\mathbb{R}),$$ if $u$ is $\alpha$-H\"{o}lder continuous in $t$ uniformly with respect to $x$ and its gradient $\nabla_xu$ is $(2\alpha-1)$-H\"{o}lder continuous in $x$ uniformly with respect to $t$ and $(\alpha-\frac{1}{2})$-H\"{o}lder continuous in $t$ uniformly with respect to $x$.
\item[(iii)] While for $\alpha>1$, if
$u(x,t)\in C^{2\alpha,\alpha}_{x,\, t}(\mathbb{R}^n\times\mathbb{R}),$
then it means that
$$\partial_tu,\, D^2_xu \in C^{2\alpha-2,\alpha-1}_{x,\, t}(\mathbb{R}^n\times\mathbb{R}).$$
\end{itemize}
In addition, we can analogously define the local parabolic H\"{o}lder space $C^{2\alpha,\alpha}_{x,\, t,\, \rm{loc}}(\mathbb{R}^n\times\mathbb{R})$.

Next, we present a detailed calculation of the counterexample mentioned in the introduction regarding the maximum principle of master equation not being valid when the initial condition does not satisfy nonnegativity on the whole $\mathbb{R}^n\times(-\infty,t_1]$.
\begin{counterexample}
Let $u(x,t)=X(x)T(t)$, where $X(x)\in C^{1,1}([-1,1])$ and $T(t)\in C^{1}([0,1])$ are bounded functions defined in \eqref{X} and \eqref{T}, respectively. Then there exists
a sufficiently small constant $\varepsilon\in (0,1)$ such that
\begin{equation*}
  (\partial_t-\Delta)^{\frac{1}{2}}u(x,t)\geq 0\,\,\mbox{in}\,\, (-1,1)\times(0,1].
\end{equation*}
\end{counterexample}
\begin{proof}
For $(x,t)\in (-1,1)\times(0,1]$, applying the definitions of $(\partial_t-\Delta)^{\frac{1}{2}}$, we divide the integral domain into three parts
\begin{eqnarray}\label{example}
&&(\partial_t-\Delta)^{\frac{1}{2}} u(x,t)\nonumber\\
 &=&C_{1,\frac{1}{2}}\int_{-\infty}^{t}\int_{-\infty}^{\infty}
  \frac{X(x)T(t)-X(y)T(\tau)}{(t-\tau)^{2}}e^{-\frac{|x-y|^2}{4(t-\tau)}}\operatorname{d}\!y\operatorname{d}\!\tau \nonumber\\
  &=& C_{1,\frac{1}{2}}\left(\int_{-\infty}^{0}\int_{|y|> 1}+\int_{-\infty}^{0}\int_{-1}^{1}+\int_{0}^{t}\int_{-\infty}^\infty
  \frac{X(x)T(t)-X(y)T(\tau)}{(t-\tau)^{2}}e^{-\frac{|x-y|^2}{4(t-\tau)}}\operatorname{d}\!y\operatorname{d}\!\tau \right)\nonumber\\
&  =:& I+II+III.
\end{eqnarray}
Now we are going to estimate each of these three integrals separately. According to the definition of function $T$, there is no need to worry about the singularity when $\tau\in (-\infty,0)$ is close to $t\in(0,1)$, since $X(x)T(t)-X(y)T(\tau)=0$ as $\tau\rightarrow 0^-$ and $t\rightarrow 0^+$. Then
in terms of \eqref{X}, \eqref{T} and the small constant $\varepsilon\in (0,1)$, we first estimate $I$ and $II$ as follows
\begin{eqnarray}\label{I}
  I &\geq& C\int_{-\infty}^{2}\int_{|y|>2}
  \frac{-\varepsilon^2+1}{(t-\tau)^{2}}e^{-\frac{|x-y|^2}{4(t-\tau)}}\operatorname{d}\!y\operatorname{d}\!\tau -C\varepsilon^2 \nonumber\\
   &\geq& C(1-\varepsilon^2)\geq C_0>0,
\end{eqnarray}
and
\begin{equation}\label{II}
  |II|\leq C(\varepsilon+\varepsilon^2)\leq C\varepsilon.
\end{equation}
Due to the presence of singular point $(x,t)$, the estimate of $III$ is somewhat complicated, and we need to divide it into the following two parts
\begin{eqnarray*}
  III &=& C_{1,\frac{1}{2}}\left(\int_{0}^{t}\int_{|y-x|\geq \sqrt{\varepsilon}}+\int_{0}^{t}\int_{|y-x|< \sqrt{\varepsilon}}
  \frac{X(x)T(t)-X(y)T(\tau)}{(t-\tau)^{2}}e^{-\frac{|x-y|^2}{4(t-\tau)}}\operatorname{d}\!y\operatorname{d}\!\tau \right)  \\
   &=:&  III_1+III_2.
\end{eqnarray*}
With respect to the estimate of $III_1$, we directly compute
\begin{eqnarray*}
  |III_1|&\leq& C(\varepsilon^2+\varepsilon)\int_{0}^{t}\int_{|y-x|\geq \sqrt{\varepsilon}} \frac{1}{(t-\tau)^{2}}e^{-\frac{|x-y|^2}{4(t-\tau)}}\operatorname{d}\!y\operatorname{d}\!\tau \\
  &=& -C(\varepsilon^2+\varepsilon)\int_{|y-x|\geq \sqrt{\varepsilon}} \int_{0}^{t}\frac{\operatorname{d}}{\operatorname{d}\!\tau} \left(e^{-\frac{|x-y|^2}{4(t-\tau)}}\right)\frac{4}{|x-y|^2}\operatorname{d}\!y\\
  &\leq&C(\varepsilon^2+\varepsilon)\int_{\sqrt{\varepsilon}}^{+\infty}\frac{\operatorname{d}\!r}{r^2}
  \leq C\sqrt{\varepsilon}
\end{eqnarray*}
The estimate of $III_2$ proceeds via a change of variables, Taylor expansion, and the definition of the Cauchy principal value, which yields
\begin{eqnarray*}
  |III_2| &=& C_{1,\frac{1}{2}}\left|\int_{0}^{t}\int_{|y-x|< \sqrt{\varepsilon}}
  \frac{X(x)(T(t)-T(\tau))+(X(x)-X(y))T(\tau)}{(t-\tau)^{2}}e^{-\frac{|x-y|^2}{4(t-\tau)}}\operatorname{d}\!y\operatorname{d}\!\tau\right|  \\
   &\leq& C\varepsilon\int_0^{t}\frac{1}{(t-\tau)^{\frac{1}{2}}} \operatorname{d}\!\tau  +C_{1,\frac{1}{2}}\left|\int_{0}^{t}\int_{|y-x|< \sqrt{\varepsilon}}
  \frac{O(|x-y|^2)T(\tau)}{(t-\tau)^{2}}e^{-\frac{|x-y|^2}{4(t-\tau)}}\operatorname{d}\!y\operatorname{d}\!\tau\right|  \\
   &\leq& C\varepsilon+ C\varepsilon\int_{|y-x|< \sqrt{\varepsilon}}\frac{O(|x-y|^2)}{|x-y|^2}\operatorname{d}\!y\leq C\varepsilon.
\end{eqnarray*}
Hence, a combination of the estimates of $III_1$ and $III_2$ leads to
\begin{equation}\label{III}
|III|\leq C\sqrt{\varepsilon}.
\end{equation}
Finally, inserting \eqref{I}-\eqref{III} into \eqref{example}, we deduce that
\begin{equation*}
  (\partial_t-\Delta)^{\frac{1}{2}} u(x,t)\geq C_0-C\sqrt{\varepsilon}\geq 0
\end{equation*}
by choosing the positive constant $\varepsilon$ small enough.
\end{proof}

We conclude this section by demonstrating that the nonlocal operator
$(\partial_t-\Delta)^s$ acting on smooth cut-off functions is bounded, which is repeatedly used in establishing our main results.
\begin{lemma}\label{mlem1}
Let
$$\eta(x,t)\in C_0^\infty\left(B_1(0)\times(-1,1)\right)$$
be a smooth cut-off function whose value belongs to $[0,1]$, then there exists a positive constant $C_0$ that depends only on $s$ and $n$ such that
$$\left|(\partial_t-\Delta)^s\eta(x,t)\right|\leq C_0 \,\, \mbox{for}\,\, (x,t)\in B_1(0)\times(-1,1).$$
\end{lemma}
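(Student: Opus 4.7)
The plan is to split the time-domain integral in the definition \eqref{nonlocaloper} of $(\partial_t-\Delta)^s\eta(x,t)$ into a ``far'' part $\tau \leq t-1$ and a ``near'' part $t-1 < \tau < t$, and estimate each part separately. On the far part, the mere bound $0\le \eta\le 1$ is enough to kill the time integral; on the near part, I will exploit the smoothness of $\eta$ together with the symmetry of the Gaussian kernel to compensate for the singular weight $(t-\tau)^{-n/2-1-s}$ as $\tau\to t^-$. The two elementary Gaussian moment identities
$$
\int_{\mathbb{R}^n} e^{-\frac{|x-y|^2}{4(t-\tau)}}\,dy = (4\pi(t-\tau))^{n/2},\qquad
\int_{\mathbb{R}^n} |y-x|^2\, e^{-\frac{|x-y|^2}{4(t-\tau)}}\,dy = 2n(t-\tau)(4\pi(t-\tau))^{n/2},
$$
together with the vanishing first moment $\int_{\mathbb{R}^n}(y-x)_i\,e^{-|x-y|^2/4(t-\tau)}\,dy=0$, will provide all the $y$-integrations I need.

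For the far part, I would use $|\eta(x,t)-\eta(y,\tau)|\leq 1$ and the zeroth-moment identity to obtain
$$
\left|\int_{-\infty}^{t-1}\!\!\int_{\mathbb{R}^n}\frac{\eta(x,t)-\eta(y,\tau)}{(t-\tau)^{n/2+1+s}}e^{-\frac{|x-y|^2}{4(t-\tau)}}\,dy\,d\tau\right|
\leq (4\pi)^{n/2}\!\int_{-\infty}^{t-1}\!\frac{d\tau}{(t-\tau)^{1+s}}=\frac{(4\pi)^{n/2}}{s},
$$
a finite constant independent of $(x,t)$. For the near part, I would decompose
$$\eta(x,t)-\eta(y,\tau)=\bigl[\eta(x,t)-\eta(x,\tau)\bigr]+\bigl[\eta(x,\tau)-\eta(y,\tau)\bigr].$$
The first bracket is bounded by $\|\partial_t\eta\|_\infty(t-\tau)$, and after the $y$-integration produces the prefactor $(4\pi)^{n/2}(t-\tau)^{-1-s}$, yielding an inner factor of order $(t-\tau)^{-s}$ that is integrable on $(t-1,t)$ because $0<s<1$. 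For the second bracket, I would Taylor-expand
$$\eta(y,\tau)-\eta(x,\tau)=\nabla_x\eta(x,\tau)\cdot(y-x)+\tfrac12(y-x)^\top D_x^2\eta(\xi,\tau)(y-x);$$
the linear term integrates to zero by the vanishing first moment of the Gaussian (this is the crucial symmetry cancellation, implicit in the principal value convention), while the quadratic remainder is bounded by $\tfrac12\|D_x^2\eta\|_\infty|y-x|^2$. The second moment identity then again gives an inner factor of order $(t-\tau)^{-s}$, integrable in $\tau$ over $(t-1,t)$.

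Adding the three contributions yields a bound on $|(\partial_t-\Delta)^s\eta(x,t)|$ that is uniform in $(x,t)\in B_1(0)\times(-1,1)$ and depends only on $n$, $s$, and the fixed derivative norms $\|\partial_t\eta\|_\infty$ and $\|D_x^2\eta\|_\infty$ of the cut-off $\eta$, which is the desired constant $C_0$. The main obstacle is the near-singularity estimate: without the symmetric cancellation of the first-order Taylor term, the $y$-integrated expression would only behave like $(t-\tau)^{-1}$ in $\tau$ and hence fail to be integrable up to $\tau=t$. Consequently, the step I would carry out most carefully when writing the full proof is the Taylor-plus-symmetry argument on $(t-1,t)$, verifying cleanly that after cancellation the surviving quadratic remainder yields the integrable factor $(t-\tau)^{-s}$.
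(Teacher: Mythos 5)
Your argument is correct, but it follows a genuinely more elementary route than the paper's. The paper splits the integral into three space-time regions ($(t-1,t)\times B_1^c(x)$, $(-\infty,t-1)\times B_1^c(x)$, and $(-\infty,t)\times B_1(x)$), replaces the Gaussian kernel by the comparison bound \eqref{lem1}, invokes the integral formula \eqref{lem2}, and for the near-space piece integrates the kernel in time first so as to reduce matters to a P.V. fractional-Laplacian-type integral of $\eta(\cdot,t)$, where the second-order cancellation is harvested in the classical $(-\Delta)^s$ fashion. You instead split only in time and perform the $y$-integration exactly via Gaussian moment identities: the zeroth moment handles the far-in-time part with the crude bound $|\eta(x,t)-\eta(y,\tau)|\le 1$, while on $(t-1,t)$ the time-Lipschitz bound plus the vanishing first moment and the second-moment identity produce the integrable factor $(t-\tau)^{-s}$. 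The underlying mechanism (second-order Taylor cancellation at the parabolic singularity) is the same, but your version avoids the spatial splitting, the kernel comparison, and the P.V. fractional-Laplacian estimate altogether, at the cost of nothing; the paper's kernel bound \eqref{lem1} is, however, the more flexible tool when the cut-off is later rescaled or when only pieces of the space integral are available. Two small remarks: your parenthetical claim that without the cancellation the inner integral behaves like $(t-\tau)^{-1}$ is slightly off --- the gradient term actually contributes $(t-\tau)^{-\frac12-s}$, so integrability fails only for $s\ge\frac12$, though the cancellation argument you give covers all $s\in(0,1)$ uniformly; and your constant depends on $\|\partial_t\eta\|_\infty$ and $\|D_x^2\eta\|_\infty$, exactly as the paper's implicitly does, consistent with $\eta$ being a fixed canonical cut-off.
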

\begin{proof}
For $(x,t)\in B_1(0)\times(-1,1)$, using the definitions of $(\partial_t-\Delta)^s$, we divide the integral domain into three parts
\begin{eqnarray}\label{lem}
 &&(\partial_t-\Delta)^s \eta(x,t)\nonumber\\
 &=&C_{n,s}\int_{-\infty}^{t}\int_{\mathbb{R}^n}
  \frac{\eta(x,t)-\eta(y,\tau)}{(t-\tau)^{\frac{n}{2}+1+s}}e^{-\frac{|x-y|^2}{4(t-\tau)}}\operatorname{d}\!y\operatorname{d}\!\tau \nonumber\\
  &=& C_{n,s}\left(\int_{t-1}^{t}\int_{B_1^c(x)}+\int_{-\infty}^{t-1}\int_{B_1^c(x)}+\int_{-\infty}^{t}\int_{B_1(x)}
  \frac{\eta(x,t)-\eta(y,\tau)}{(t-\tau)^{\frac{n}{2}+1+s}}e^{-\frac{|x-y|^2}{4(t-\tau)}}\operatorname{d}\!y\operatorname{d}\!\tau \right)\nonumber\\
&  =:& I_1+I_2+I_3.
\end{eqnarray}
In order to estimate the first term $I_1$, we combine the smoothness of $\eta(x,t)$ with $0<s<1$ and the fact that
\begin{equation}\label{lem1}
  \frac{e^{-\frac{|z|^2}{4\tau}}}{\tau^{\frac{n}{2}+1+s}}\leq \frac{C}{|z|^{n+2+2s}+\tau^{\frac{n}{2}+1+s}}
\end{equation}
for $\tau>0$ and $z\in \mathbb{R}^n$, where the positive constant $C$ depends on $n$ and $s$, then
\begin{eqnarray*}
  I_1&=& \frac{1}{(4\pi)^{\frac{n}{2}}|\Gamma(-s)|}\int_{t-1}^{t}\int_{B_1^c(x)}
  \frac{\eta(x,t)-\eta(x,\tau)}{(t-\tau)^{\frac{n}{2}+1+s}}e^{-\frac{|x-y|^2}{4(t-\tau)}}\operatorname{d}\!y\operatorname{d}\!\tau  \nonumber\\
   && +C_{n,s}\int_{t-1}^{t}\int_{B_1^c(x)}
  \frac{\eta(x,\tau)-\eta(y,\tau)}{(t-\tau)^{\frac{n}{2}+1+s}}e^{-\frac{|x-y|^2}{4(t-\tau)}}\operatorname{d}\!y\operatorname{d}\!\tau  \nonumber\\
  &\leq&\frac{1}{|\Gamma(-s)|}\int_{t-1}^{t}\frac{|\eta(x,t)-\eta(x,\tau)|}{(t-\tau)^{1+s}}
  \int_{\mathbb{R}^n}
  \frac{e^{-\frac{|x-y|^2}{4(t-\tau)}}}{[4\pi(t-\tau)]^{\frac{n}{2}}}\operatorname{d}\!y\operatorname{d}\!\tau  \nonumber\\
  &&+C\int_{t-1}^{t}\int_{\mathbb{R}^n}
  \frac{|x-y|^2}{|x-y|^{n+2+2s}+(t-\tau)^{\frac{n}{2}+1+s}}\operatorname{d}\!y\operatorname{d}\!\tau  \nonumber\\
   &=&\frac{1}{|\Gamma(-s)|}\int_{t-1}^{t}\frac{|\eta(x,t)-\eta(x,\tau)|}{(t-\tau)^{1+s}}\operatorname{d}\!\tau  +C\int_{0}^{1}\int_{\mathbb{R}^n}
  \frac{|y|^2}{|y|^{n+2+2s}+\tau^{\frac{n}{2}+1+s}}\operatorname{d}\!y\operatorname{d}\!\tau  \nonumber\\
  &\leq&C\int_{t-1}^{t}\frac{(t-\tau)}{(t-\tau)^{1+s}}\operatorname{d}\!\tau  +C\int_{0}^{1}\int_{0}^{+\infty}
  \frac{r^{n+1}}{r^{n+2+2s}+\tau^{\frac{n}{2}+1+s}}\operatorname{d}\!r\operatorname{d}\!\tau\nonumber\\
  &=&\frac{C}{1-s} +C\int_{0}^{1}\int_{0}^{+\infty}
  \frac{r^{n+1}}{r^{n+2+2s}+\tau^{\frac{n}{2}+1+s}}\operatorname{d}\!r\operatorname{d}\!\tau.
\end{eqnarray*}
We further use the formula
\begin{equation}\label{lem2}
  \int_0^{+\infty}\frac{r^q}{a+r^p}\operatorname{d}\!r=\frac{\pi}{p\sin\frac{(q+1)\pi}{p}}a^{\frac{q+1-p}{p}},
\end{equation}
where the positive constants $a$, $p$ and $q$ satisfy $p>q+1$, then
\begin{equation}\label{lem3}
  |I_1| \leq\frac{C}{1-s} +C\int_{0}^{1}
  \tau^{-s}\operatorname{d}\!\tau
   \leq  C(n,s).
\end{equation}
Next, we apply \eqref{lem1} to estimate $I_2$ as follows
\begin{eqnarray}\label{lem4}
  |I_2| &=& \left| C_{n,s}\int_{-\infty}^{t-1}\int_{B_1^c(x)}
  \frac{\eta(x,t)}{(t-\tau)^{\frac{n}{2}+1+s}}e^{-\frac{|x-y|^2}{4(t-\tau)}}\operatorname{d}\!y\operatorname{d}\!\tau \right|\nonumber\\
   &\leq&  C\int_{-\infty}^{t-1}\int_{B_1^c(x)}\frac{1}{|x-y|^{n+2+2s}+(t-\tau)^{\frac{n}{2}+1+s}}
   \operatorname{d}\!y\operatorname{d}\!\tau\leq C(n,s).
\end{eqnarray}
With respect to the estimate of $I_3$, by using the change of variables, Taylor expansion, the definition of Cauchy principal value and \eqref{lem2}, we have
\begin{eqnarray}\label{lem5}
  |I_3| &\leq&  \frac{1}{(4\pi)^{\frac{n}{2}}|\Gamma(-s)|}\left|\int_{-\infty}^{t}\int_{B_1(x)}
  \frac{\eta(x,t)-\eta(y,t)}{(t-\tau)^{\frac{n}{2}+1+s}}e^{-\frac{|x-y|^2}{4(t-\tau)}}\operatorname{d}\!y\operatorname{d}\!\tau \right|\nonumber \\
   &&+  C_{n,s}\left|\int_{-\infty}^{t}\int_{B_1(x)}
  \frac{\eta(y,t)-\eta(y,\tau)}{(t-\tau)^{\frac{n}{2}+1+s}}e^{-\frac{|x-y|^2}{4(t-\tau)}}\operatorname{d}\!y\operatorname{d}\!\tau \right|\nonumber\\
  &\leq&\frac{4^s\Gamma(\frac{n}{2}+s)}{\pi^{\frac{n}{2}}|\Gamma(-s)|}\left|P.V.\int_{B_1(x)}\frac{\eta(x,t)-\eta(y,t)}{|x-y|^{n+2s}}\operatorname{d}\!y\right|\nonumber\\
   &&+C\int_{-\infty}^{t}\int_{B_1(x)}
  \frac{t-\tau}{|x-y|^{n+2+2s}+(t-\tau)^{\frac{n}{2}+1+s}}\operatorname{d}\!y\operatorname{d}\!\tau  \nonumber\\
&\leq&C\int_{B_1(x)}\frac{1}{|x-y|^{n+2s-2}}\operatorname{d}\!y
+C\int_{B_1(0)}\frac{1}{|z|^{n+2s-2}}
  \operatorname{d}\!z \nonumber\\
  &\leq&C(n,s).
\end{eqnarray}
Finally, inserting \eqref{lem3}-\eqref{lem5} into \eqref{lem}, we deduce that
$$\left|(\partial_t-\Delta)^s\eta(x,t)\right|\leq C_0 \,\, \mbox{for}\,\, (x,t)\in B_1(0)\times(-1,1),$$
where the positive constant $C_0=C_0(n,s)$.
Therefore, we complete the proof of Lemma \ref{mlem1}\,.
\end{proof}
As a byproduct of Lemma \ref{mlem1}\,, we can immediately derive the following result through scaling and translation transformations.
\begin{corollary}\label{coro1}
Let $$\eta_r(x,t):=\eta\left(\frac{x-x^0}{r},\frac{t-t_0}{r^{2}}\right)\in C_0^\infty\left(B_r(x^0)\times(-r^{2}+t_0,r^{2}+t_0)\right)$$
for $(x^0,t_0)\in \mathbb{R}^n \times \mathbb{R}$ and $r>0$, then
$$|(\partial_t-\Delta)^s\eta_r(x,t)|\leq \frac{C_0}{r^{2s}} \,\, \mbox{in}\,\, B_r(x^0)\times(-r^{2}+t_0,r^{2}+t_0),$$
where the smooth cut-off function $\eta$ and the positive constant $C_0$ are defined in Lemma \ref{mlem1}\,.
\end{corollary}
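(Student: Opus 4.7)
The plan is to deduce the corollary from Lemma \ref{mlem1} purely by exploiting the parabolic scaling invariance of the operator $(\partial_t-\Delta)^s$. The key observation is that the heat kernel $G(z,\sigma)=\sigma^{-n/2}e^{-|z|^2/(4\sigma)}$ satisfies $G(rz,r^2\sigma)=r^{-n}G(z,\sigma)$, so the integral defining $(\partial_t-\Delta)^s$ is compatible with the anisotropic dilation $(x,t)\mapsto(rx,r^2t)$. This suggests that $(\partial_t-\Delta)^s$ should act as a multiplier of weight $r^{-2s}$ under such a rescaling, exactly the factor we need.

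First, I would write out the definition of $(\partial_t-\Delta)^s\eta_r(x,t)$ using the integral representation \eqref{nonlocaloper} and perform the change of variables
\begin{equation*}
y=x^0+r\tilde{y},\qquad \tau=t_0+r^2\tilde{\tau},
\end{equation*}
together with the shorthand $\tilde{x}=(x-x^0)/r$ and $\tilde{t}=(t-t_0)/r^2$. Under this substitution, the numerator becomes $\eta(\tilde{x},\tilde{t})-\eta(\tilde{y},\tilde{\tau})$; the factor $(t-\tau)^{n/2+1+s}$ pulls out an $r^{n+2+2s}$; the Gaussian exponent becomes $-|\tilde{x}-\tilde{y}|^2/(4(\tilde{t}-\tilde{\tau}))$ since the $r$'s cancel; and the measure $dy\,d\tau$ contributes $r^{n+2}\,d\tilde{y}\,d\tilde{\tau}$. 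Collecting factors yields the identity
\begin{equation*}
(\partial_t-\Delta)^s\eta_r(x,t)=\frac{1}{r^{2s}}\,(\partial_t-\Delta)^s\eta(\tilde{x},\tilde{t}),
\end{equation*}
with the Cauchy principal value in $y$ transported into the one in $\tilde{y}$.

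Second, I would verify that for $(x,t)\in B_r(x^0)\times(-r^2+t_0,r^2+t_0)$, the rescaled point $(\tilde{x},\tilde{t})$ lies in $B_1(0)\times(-1,1)$. Then applying Lemma \ref{mlem1} to $\eta$ at the point $(\tilde{x},\tilde{t})$ produces the uniform bound $|(\partial_t-\Delta)^s\eta(\tilde{x},\tilde{t})|\leq C_0$, and substituting back gives the desired estimate $|(\partial_t-\Delta)^s\eta_r(x,t)|\leq C_0/r^{2s}$, with the same dimensional constant $C_0=C_0(n,s)$.

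There is no real obstacle here: the proof is essentially a one-line scaling identity followed by an appeal to Lemma \ref{mlem1}. The only mild technicality is bookkeeping of the principal value when the singular point $(x,t)$ is moved by a translation and a parabolic dilation; however, since $\eta\in C_0^\infty$, the integrand is smooth enough for Fubini/dominated convergence to make the substitution entirely rigorous, so the scaling identity holds pointwise.
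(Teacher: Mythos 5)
Your proof is correct and is precisely the scaling-and-translation argument the paper invokes (the paper states the corollary follows from Lemma \ref{mlem1} ``through scaling and translation transformations'' without writing out the details). The change of variables, the resulting identity $(\partial_t-\Delta)^s\eta_r(x,t)=r^{-2s}(\partial_t-\Delta)^s\eta(\tilde{x},\tilde{t})$, and the appeal to Lemma \ref{mlem1} on $B_1(0)\times(-1,1)$ are exactly what is intended.
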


\section{Weighted average inequality}\label{2}
In this section, we establish a key estimate for the fully fractional heat operator $(\partial_t-\Delta)^s$, which is commonly referred to as the generalized weighted average inequality (i.e., Theorem \ref{weightAveIneq}). This estimate is particularly useful in establishing the maximum principle and the direct sliding method for master equations, as discussed in later sections.

\begin{proof}[Proof of Theorem \ref{weightAveIneq}]
According to the definition of the nonlocal operator $(\partial_t-\Delta)^s$ and the maximality of $u(x,t)$ at the point $(x^0,t_0)$ in $\mathbb{R}^n\times(-\infty,t_0]$, we derive
\begin{eqnarray*}
  (\partial_t-\Delta)^su(x^0,t_0) &=&C_{n,s}\int_{-\infty}^{t_0}\int_{\mathbb{R}^n}
  \frac{u(x^0,t_0)-u(y,\tau)}{(t_0-\tau)^{\frac{n}{2}+1+s}}e^{-\frac{|x^0-y|^2}{4(t_0-\tau)}}\operatorname{d}\!y\operatorname{d}\!\tau  \\
   &\geq&C_{n,s}\int_{-\infty}^{t_0-r^2}\int_{B_r^c(x^0)}
  \frac{u(x^0,t_0)-u(y,\tau)}{(t_0-\tau)^{\frac{n}{2}+1+s}}e^{-\frac{|x^0-y|^2}{4(t_0-\tau)}}\operatorname{d}\!y\operatorname{d}\!\tau  \\
   &=& \frac{C_{n,s}u(x^0,t_0)}{r^{2s}}\int_{-\infty}^{-1}\int_{B_1^c(0)}
  \frac{e^{\frac{|y|^2}{4\tau}}}{(-\tau)^{\frac{n}{2}+1+s}}\operatorname{d}\!y\operatorname{d}\!\tau   \\
  && - C_{n,s}\int_{-\infty}^{t_0-r^2}\int_{B_r^c(x^0)}
  \frac{u(y,\tau)}{(t_0-\tau)^{\frac{n}{2}+1+s}}e^{-\frac{|x^0-y|^2}{4(t_0-\tau)}}\operatorname{d}\!y\operatorname{d}\!\tau.
\end{eqnarray*}
We denote
$$C_0:=\frac{1}{\int_{-\infty}^{-1}\int_{B^c_{1}(0)}\frac{e^{\frac{|y|^2}{4\tau}}}{(-\tau)^{\frac{n}{2}+1+s}}\operatorname{d}\!y\operatorname{d}\!\tau},$$ then it follows that
\begin{equation*}
  u(x^0,t_0) \leq \frac{C_0}{C_{n,s}}r^{2s}(\partial_t-\Delta)^su(x^0,t_0)+C_0r^{2s}\int_{-\infty}^{t_0-r^2}\int_{B^c_{r}(x^0)}
  \frac{u(y,\tau)e^{-\frac{|x^0-y|^2}{4(t_0-\tau)}}}{(t_0-\tau)^{\frac{n}{2}+1+s}}\operatorname{d}\!y\operatorname{d}\!\tau.
\end{equation*}

It remains to estimate validity of \eqref{measure}. In terms of the definition of $C_0$, a direct calculation shows that
\begin{eqnarray*}
   && C_0r^{2s}\int_{-\infty}^{t_0-r^2}\int_{B^c_{r}(x^0)}
  \frac{e^{-\frac{|x^0-y|^2}{4(t_0-\tau)}}}{(t_0-\tau)^{\frac{n}{2}+1+s}}\operatorname{d}\!y\operatorname{d}\!\tau \\
   &=& C_0\int_{-\infty}^{-1}\int_{B^c_{1}(0)}
  \frac{e^{\frac{|y|^2}{4\tau}}}{(-\tau)^{\frac{n}{2}+1+s}}\operatorname{d}\!y\operatorname{d}\!\tau  =1.
\end{eqnarray*}
Thus, we complete the proof of Theorem \ref{weightAveIneq}\,.
\end{proof}

\section{Maximum principle in unbounded domains and its direct application}\label{3}

In this section, we demonstrate the maximum principle in unbounded domains (i.e., Theorem \ref{MPUB}) for master equations by utilizing the perturbation argument as well as the weighted average inequality stated in Theorem \ref{weightAveIneq}\,. Furthermore, we propose that a direct application of this maximum principle is to establish the monotonicity of solutions to master equations on an upper half space. More importantly, it serves as a fundamental ingredient in carrying out the sliding method adopted in the proof of Gibbons' conjecture.

\begin{proof}[Proof of Theorem \ref{MPUB}] We argue by contradiction, if \eqref{MPUB3} is violated, since $u(x,t)$ has an upper bound in $\Omega\times\mathbb{R}$, then there exists a positive constant $A$ such that
\begin{equation}\label{MPUB4}
  \sup_{\Omega\times\mathbb{R}}u(x,t):=A>0.
\end{equation}
Note that the set $\Omega\times\mathbb{R}$ is  unbounded, then the supremum of $u(x,t)$ may not be attained.
Even so, \eqref{MPUB4} implies that there exists a sequence $\{(x^k,t_k)\}\subset \Omega\times\mathbb{R}$ such that
$$0<u(x^k,t_k):=A_k\rightarrow A,\,\, \mbox{as}\,\, k\rightarrow\infty.$$
Let $\varepsilon_k:=A-A_k$, then the sequence $\{\varepsilon_k\}$ is nonnegative and
tends to zero as $k\rightarrow\infty$.
To proceed, we introduce the following auxiliary function
$$v_k(x,t):=u(x,t)+\varepsilon_k\eta_k(x,t),$$
where the smooth cut-off function
$$\eta_k(x,t):=\eta\left(\frac{x-x^k}{r},\frac{t-t_k}{r^2}\right)\in C_0^\infty(B_r(x^k)\times(t_k-r^2,t_k+r^2)),$$
 satisfying
\begin{equation*}
\eta(x,t):=\left\{
\begin{array}{ll}
1  & (x,t)\in B_{\frac{1}{2}}(0)\times(-\frac{1}{2},\frac{1}{2}), \\
  0 , ~ &(x,t)\not\in B_{1}(0)\times(-1,1).
\end{array}
\right.
\end{equation*}
We first determine the radius $r$ in the scaled and translated smooth function $\eta_k$.
In terms of the condition \eqref{MPUB1}, we can directly evaluate
\begin{equation*}
  \lim_{R\rightarrow +\infty}\frac{\left|(B_{3R}(x^k)\setminus B_{\frac{3R}{\sqrt[n]{2}}}(x^k))\cap \Omega^c\right|}{|B_{3R}(x^k)|}\geq\frac{\bar{C}}{2}>0.
\end{equation*}
It follows that there exists a sufficiently large radius $R_k$ such that
\begin{equation}\label{MPUB8}
  \frac{\left|(B_{3R}(x^k)\setminus B_{\frac{3R}{\sqrt[n]{2}}}(x^k))\cap \Omega^c\right|}{|B_{3R}(x^k)|}\geq\frac{\bar{C}}{4}>0 \,\, \mbox{for}\,\, R\geq R_k.
\end{equation}
From then on, we select the radius
$$r=\frac{R_k}{\sqrt[n]{2}}.$$
Let
$$Q_r(x^k,t_k):=B_{r}(x^k)\times (t_k-r^{2},t_k+r^{2})$$
be a parabolic cylinder, then a straightforward calculation implies that
$$v_k(x^k,t_k)=u(x^k,t_k)+\varepsilon_k=A_k+A-A_k=A,$$
and
$$v_k(x,t)=u(x,t)\leq A\,\, \mbox{for}\,\, (x,t)\not\in Q_r(x^k,t_k).$$
It is evident that the auxiliary function $v_k(x,t)$ must attain its maximum value in $Q_r(x^k,t_k)$. More precisely, there exists a point $(\bar{x}^k,\bar{t}_k)\in Q_r(x^k,t_k)$ such that
\begin{equation}\label{MPUB5}
 A+\varepsilon_k \geq v_k(\bar{x}^k,\bar{t}_k)=\sup_{\mathbb{R}^n\times\mathbb{R}}v_k(x,t)\geq A>0.
\end{equation}
Furthermore, by virtue of the definition of $v_k$, we derive
$$A\geq u(\bar{x}^k,\bar{t}_k)\geq A-\varepsilon_k=A_k>0.$$

Combining the definition of $v_k$ with the differential equation in \eqref{MPUB2} and Corollary \ref{coro1}\,, we obtain
\begin{equation}\label{MPUB6}
 (\partial_t-\Delta)^sv_k(\bar{x}^k,\bar{t}_k)=(\partial_t-\Delta)^su(\bar{x}^k,\bar{t}_k)
 +\varepsilon_k(\partial_t-\Delta)^s\eta_k(\bar{x}^k,\bar{t}_k)\leq\frac{C\varepsilon_k}{r^{2s}}.
\end{equation}
Now applying the weighted average inequality established in Theorem \ref{weightAveIneq} to $v_k$ at its maximum point $(\bar{x}^k,\bar{t}_k)$ and combining with \eqref{MPUB5} and \eqref{MPUB6}, we have
\begin{eqnarray}\label{MPUB7}
  A &\leq &v_k(\bar{x}^k,\bar{t}_k)\nonumber \\
&\leq & \frac{C_0}{C_{n,s}}(2r)^{2s}(\partial_t-\Delta)^sv_k(\bar{x}^k,\bar{t}_k) +C_0(2r)^{2s}\int_{-\infty}^{\bar{t}_k-(2r)^2}\int_{B^c_{2r}(\bar{x}^k)}
  \frac{v_k(y,\tau)e^{-\frac{|\bar{x}^k-y|^2}{4(\bar{t}_k-\tau)}}}{(\bar{t}_k-\tau)^{\frac{n}{2}+1+s}}\operatorname{d}\!y\operatorname{d}\!\tau\nonumber\\
  &\leq& C\varepsilon_k+C_0(2r)^{2s}\int_{-\infty}^{\bar{t}_k-(2r)^2}\int_{B^c_{2r}(\bar{x}^k)}
  \frac{v_k(y,\tau)e^{-\frac{|\bar{x}^k-y|^2}{4(\bar{t}_k-\tau)}}}{(\bar{t}_k-\tau)^{\frac{n}{2}+1+s}}\operatorname{d}\!y\operatorname{d}\!\tau.
\end{eqnarray}
It remains to be estimated the second term on the left side of \eqref{MPUB7}.
A combination of the containment relationship of balls
\begin{equation}\label{MPUB7-1}
B_r(x^k)\subset B_{2r}(\bar{x}^k)\subset B_{3r}(x^k)
\end{equation}
with the exterior condition of $u$ in \eqref{MPUB2} and the definition of the smooth function $\eta_k$ yields that
\begin{equation}\label{MPUB7-2}
  v_k(x,t)=u(x,t)+\varepsilon_k\eta_k(x,t)=u(x,t)\leq 0,\,\, \mbox{in}\,\, (B_{2r}^c(\bar{x}^k)\cap\Omega^c)\times \mathbb{R}.
\end{equation}
Next, applying \eqref{MPUB8}, \eqref{MPUB5}, \eqref{MPUB7-1} and \eqref{MPUB7-2}, and combining with the definition of $C_0$ presented in Theorem \ref{weightAveIneq} and the chosen of the radius $r=\frac{R_k}{\sqrt[n]{2}}$,
we estimate the second term on the left side of \eqref{MPUB7} as follows
\begin{eqnarray}\label{MPUB9}
   && C_0(2r)^{2s}\int_{-\infty}^{\bar{t}_k-(2r)^2}\int_{B^c_{2r}(\bar{x}^k)}
  \frac{v_k(y,\tau)e^{-\frac{|\bar{x}^k-y|^2}{4(\bar{t}_k-\tau)}}}{(\bar{t}_k-\tau)^{\frac{n}{2}+1+s}}\operatorname{d}\!y\operatorname{d}\!\tau \nonumber\\
  &=&  C_0(2r)^{2s}\int_{-\infty}^{\bar{t}_k-(2r)^2}\left[\int_{B^c_{2r}(\bar{x}^k)\cap\Omega}
  \frac{v_k(y,\tau)e^{-\frac{|\bar{x}^k-y|^2}{4(\bar{t}_k-\tau)}}}{(\bar{t}_k-\tau)^{\frac{n}{2}+1+s}}\operatorname{d}\!y+ \int_{B^c_{2r}(\bar{x}^k)\cap\Omega^c}
  \frac{(A+\varepsilon_k)e^{-\frac{|\bar{x}^k-y|^2}{4(\bar{t}_k-\tau)}}}{(\bar{t}_k-\tau)^{\frac{n}{2}+1+s}}\operatorname{d}\!y \right. \nonumber\\
   &&\left.-\int_{B^c_{2r}(\bar{x}^k)\cap\Omega^c}
  \frac{(A+\varepsilon_k)e^{-\frac{|\bar{x}^k-y|^2}{4(\bar{t}_k-\tau)}}}{(\bar{t}_k-\tau)^{\frac{n}{2}+1+s}}\operatorname{d}\!y +\int_{B^c_{2r}(\bar{x}^k)\cap\Omega^c}
  \frac{v_k(y,\tau)e^{-\frac{|\bar{x}^k-y|^2}{4(\bar{t}_k-\tau)}}}{(\bar{t}_k-\tau)^{\frac{n}{2}+1+s}}\operatorname{d}\!y\right]\operatorname{d}\!\tau\nonumber\\
  &\leq&C_0(2r)^{2s}\int_{-\infty}^{\bar{t}_k-(2r)^2}\int_{B^c_{2r}(\bar{x}^k)}
  \frac{(A+\varepsilon_k)e^{-\frac{|\bar{x}^k-y|^2}{4(\bar{t}_k-\tau)}}}{(\bar{t}_k-\tau)^{\frac{n}{2}+1+s}}\operatorname{d}\!y\operatorname{d}\!\tau\nonumber\\
  &&-C_0(2r)^{2s}\int_{-\infty}^{\bar{t}_k-(3r)^2}\int_{B^c_{3r}(x^k)\cap\Omega^c}
  \frac{(A+\varepsilon_k)e^{-\frac{|\bar{x}^k-y|^2}{4(\bar{t}_k-\tau)}}}{(\bar{t}_k-\tau)^{\frac{n}{2}+1+s}}\operatorname{d}\!y\operatorname{d}\!\tau\nonumber\\
&=&A+\varepsilon_k-C_0(2r)^{2s}\int_{-\infty}^{\bar{t}_k-(3r)^2}\int_{B^c_{3r}(x^k)\cap\Omega^c}
  \frac{(A+\varepsilon_k)e^{-\frac{|\bar{x}^k-y|^2}{4(\bar{t}_k-\tau)}}}{(\bar{t}_k-\tau)^{\frac{n}{2}+1+s}}\operatorname{d}\!y\operatorname{d}\!\tau\nonumber\\
  &\leq&A+\varepsilon_k-C_0(\frac{2R_k}{\sqrt[n]{2}})^{2s}\int_{\bar{t}_k-(3R_k)^2}^{\bar{t}_k-(\frac{3R_k}{\sqrt[n]{2}})^2}\int_{(B_{3R_k}(x^k)\setminus B^c_{\frac{3R_k}{\sqrt[n]{2}}}(x^k))\cap\Omega^c}
  \frac{(A+\varepsilon_k)e^{-\frac{|\bar{x}^k-y|^2}{4(\bar{t}_k-\tau)}}}{(\bar{t}_k-\tau)^{\frac{n}{2}+1+s}}\operatorname{d}\!y\operatorname{d}\!\tau\nonumber\\
&\leq&A+\varepsilon_k-C_0(\frac{2R_k}{\sqrt[n]{2}})^{2s} (A+\varepsilon_k)e^{-\frac{(3\sqrt[n]{2}+1)^2}{36}}\frac{(3R_k)^2-(\frac{3R_k}{\sqrt[n]{2}})^2}{(3R_k)^{n+2+2s}}\left|(B_{3R_k}(x^k)\setminus B^c_{\frac{3R_k}{\sqrt[n]{2}}}(x^k))\cap\Omega^c\right|
\nonumber\\
&\leq&A+\varepsilon_k-C_0(\frac{2R_k}{\sqrt[n]{2}})^{2s} (A+\varepsilon_k)e^{-\frac{(3\sqrt[n]{2}+1)^2}{36}}\frac{(3R_k)^2-(\frac{3R_k}{\sqrt[n]{2}})^2}{(3R_k)^{n+2+2s}}\frac{\bar{C}}{4}\left|B_{3R_k}(x^k)\right|
\nonumber\\
&\leq&(1-C)(A+\varepsilon_k).
\end{eqnarray}

Finally, substituting \eqref{MPUB9} into \eqref{MPUB7}, we deduce that
$$0<A\leq C\varepsilon_k+ (1-C) A,$$
which is a contradiction for sufficiently large $k$, and thus completes the proof of Theorem \ref{MPUB}\,.
\end{proof}
It is well known that the maximum principle is a powerful tool and has many
straightforward applications in the study of partial differential equations. For instance,
Theorem \ref{MPUB} can be directly used to establish Corollary \ref{Application}\,, that is
the monotonicity of solutions to master equations on an upper half space.
\begin{proof}[Proof of Corollary \ref{Application}]
From now on, for any $\lambda>0$, let
$$u_{\lambda}(x,t):=u(x^\lambda,t),$$
where $x^\lambda:=x+\lambda e_n$ with $e_n=(0,...,0,1)$, and
$$w_{\lambda}(x,t):=u(x,t)-u_{\lambda}(x,t),$$
where the positions of the points $x$ and $x^\lambda$ as illustrated in Figure 4 below.
\begin{center}
\begin{tikzpicture}[scale=0.8]
\draw [very thick]  [black] [->,very thick](-4,0)--(4,0) node [anchor=north west] {$x'$};
\draw [very thick]  [black!80][->,very thick] (0,-1)--(0,4) node [black][ above] {$x_n$};
\path node at (-0.3,-0.3) {$0$};
\path (1,1)[very thick,fill=red]  circle(1.8pt) node at (1.3,1) {$x$};
\path (1,3)[very thick,fill=red]  circle(1.8pt) node at (1.4,3) {$x^\lambda$};
\draw [very thick] [dashed] [blue] (1,1)--(1,3);
\node [below=0.5cm, align=flush center,text width=12cm] at  (0,-0.5)
        {Figure 4. The positions of the points. };
\end{tikzpicture}
\end{center}
If $u(x,t)$ is not increasing with respect to $x_n$ in $\mathbb{R}^n_+$ for any $t\in\mathbb{R}$, then there exists some point $(x,t)\in\mathbb{R}^n_+\times\mathbb{R}$ such that $w_\lambda(x,t)>0$.
Since $f(t,u)$ is monotonically decreasing with respect to $u$, we derive
\begin{eqnarray}\label{AP2}
 (\partial_t-\Delta)^sw_\lambda(x,t)&=&(\partial_t-\Delta)^su(x,t)-(\partial_t-\Delta)^su_\lambda(x,t)\nonumber\\
 &=&f(t,u(x,t))-f(t,u_\lambda(x,t))\leq 0
\end{eqnarray}
in $\mathbb{R}^n_+\times\mathbb{R}$ at the points where $w_\lambda(x,t)>0$.
Meanwhile, a combination the exterior condition in \eqref{AP1} and the nonnegativity of $u(x,t)$ yields that
\begin{equation*}
  w_\lambda(x,t)=u(x,t)-u_{\lambda}(x,t)=-u_{\lambda}(x,t)\leq 0 \,\, \mbox{in}\,\, (\mathbb{R}^n\setminus\mathbb{R}^n_+)\times\mathbb{R}.
\end{equation*}
Then by virtue of Theorem \ref{MPUB} with $\Omega=\mathbb{R}^n_+$, we conclude that
\begin{equation*}
  w_\lambda(x,t)\leq 0 \,\, \mbox{in}\,\, \mathbb{R}^n_+\times\mathbb{R}
\end{equation*}
for any $\lambda>0$. It implies that $u(x,t)$ must be increasing with respect to $x_n$ in $\mathbb{R}^n_+$ for any $t\in\mathbb{R}$.

In the sequel, we further prove that the increase of $u(x,t)$ is strict, it suffices to claim that
\begin{equation}\label{AP3}
  w_\lambda(x,t)< 0 \,\, \mbox{in}\,\, \mathbb{R}^n_+\times\mathbb{R}
\end{equation}
for any $\lambda>0$. If not, then there exist some $\lambda_0>0$ and a point $(x^0,t_0)\in \mathbb{R}^n_+\times\mathbb{R}$ such that
\begin{equation*}
  w_{\lambda_0}(x^0,t_0)=0=\max_{\mathbb{R}^n\times\mathbb{R}}  w_{\lambda_0}(x,t).
\end{equation*}
Combining the differential inequality \eqref{AP2} with the definition of the nonlocal operator $(\partial_t-\Delta)^s$, we deduce
\begin{equation*}
  (\partial_t-\Delta)^s w_{\lambda_0}(x^0,t_0)= C_{n,s}\int_{-\infty}^{t_0}\int_{\mathbb{R}^n}
  \frac{- w_{\lambda_0}(y,\tau)}{(t_0-\tau)^{\frac{n}{2}+1+s}}e^{-\frac{|x^0-y|^2}{4(t_0-\tau)}}\operatorname{d}\!y\operatorname{d}\!\tau=0.
\end{equation*}
By using $w_{\lambda_0}(x,t)\leq 0$ in $\mathbb{R}^n\times\mathbb{R}$, then we must have
$w_{\lambda_0}(x,t)\equiv 0$ in $\mathbb{R}^n\times(-\infty,t_0]$, which contradicts the fact that $w_{\lambda_0}(x,t)\not\equiv 0$ in $\mathbb{R}^n$ for any fixed $t\in (-\infty,t_0]$ due to the exterior condition and the interior positivity of $u(x,t)$. Hence, we verify that the assertion \eqref{AP3} is valid, then the proof of Corollary \ref{Application} is complete.
\end{proof}

\section{The proof of Gibbons' conjecture for master equations}\label{4}

This section provides a complete proof of Gibbons' conjecture for master equations, which is based on a direct sliding method applicable to the master equation version, with the weighted average inequality (i.e., Theorem \ref{weightAveIneq}) and the maximum principle in unbounded domains (i.e., Theorem \ref{MPUB}) as effective techniques throughout.

For the ease of readability, we now present an outline of the proof.
Keeping the notations $x^\lambda$, $u_{\lambda}$ and $w_{\lambda}$ defined in the proof of Corollary \ref{Application}\,, we proceed in three steps and first argue that the assertion
\begin{equation}\label{Main1-1}
w_\lambda(x,t)\leq0 \,\,\mbox{in}\,\, \mathbb{R}^n\times\mathbb{R}
\end{equation}
is valid for sufficiently large $\lambda$ by using the maximum principle in unbounded domains.
The first Step provides a starting point for sliding the domain upwards along the $x_n$-axis.

In the second step, we continuously decrease $\lambda$ to its limiting position as long as the inequality \eqref{Main1-1} holds. We aim to
show that the domain can slide back to overlap with the original domain.
Otherwise, we will utilize the maximum principle in unbounded domains again and combine the weighted average inequality with the perturbation technique and the limit argument to derive a contradiction. Based on \eqref{Main1-1} for any $\lambda>0$, we further deduce that $u(x,t)$ is strictly increasing with respect to $x_{n}$ for any $t\in\mathbb{R}$ by establishing a strong maximum principle
\begin{equation*}
  w_\lambda(x,t)< 0, \,\,\mbox{in}\,\, \mathbb{R}^n\times\mathbb{R} \,\, \mbox{for any}\,\, \lambda>0.
\end{equation*}

In the final step, we apply the sliding method along any direction that has an acute angle with the positive $x_n$ axis to demonstrate that the entire solution $u(x,t)$ is one-dimensional symmetry for any $t\in\mathbb{R}$, that is,
$$u(x',x_n,t)=u(x_n,t)\,\, \mbox{for any}\,\, t\in\mathbb{R}.$$

Next, we provide the detailed process of proving Gibbons' conjecture for master equations.

\begin{proof}[Proof of Theorem \ref{Mainresult}]
The proof is divided into three steps.

\noindent \textbf{Step 1.} We first show that
\begin{equation}\label{Main1}
w_\lambda(x,t)\leq0 \,\,\mbox{in}\,\, \mathbb{R}^n\times\mathbb{R}
\end{equation}
for sufficiently large $\lambda$.

The uniform convergence condition of $u$ in Theorem \ref{Mainresult} implies that there exists a sufficiently large $a>0$ such that
\begin{equation}\label{Main2}
  |u(x,t)|\geq 1-\delta \,\,\mbox{for}\,\, |x_n|\geq a\,\, \mbox{and}\,\, (x',t)\in\mathbb{R}^{n-1}\times\mathbb{R}.
\end{equation}
In order to prove \eqref{Main1}, assume on the contrary that there exists a positive constant $A$ such that
\begin{equation}\label{Main3}
\sup_{\mathbb{R}^n\times\mathbb{R}}w_\lambda(x,t)=A>0
\end{equation}
for any sufficiently large $\lambda$.
Let the auxiliary function
$$\bar{w}_\lambda(x,t):=w_\lambda(x,t)-\frac{A}{2},$$
now we claim that
\begin{equation}\label{Main4}
\bar{w}_\lambda(x,t)\leq0 \,\,\mbox{in}\,\, \mathbb{R}^n\times\mathbb{R}
\end{equation}
for sufficiently large $\tau$.
We further choose a sufficiently large constant $M>a$ such that $$\bar{w}_\lambda(x,t)\leq 0 \,\,\mbox{for any} \,\, (x,t)\in \mathbb{R}^{n}\times\mathbb{R}\,\, \mbox{with}\,\, x_n\geq M.$$
We denote the unbounded domain $\Omega:=\mathbb{R}^{n-1}\times(-\infty,M)$, then it follows that
$$\bar{w}_\lambda(x,t)\leq 0 \,\, \mbox{in} \,\, \Omega^c\times\mathbb{R},$$
which is in accordance with the exterior condition in Theorem \ref{MPUB}\,.

Next, we devote to verifying
\begin{equation}\label{Main5}
  (\partial_t-\Delta)^s\bar{w}_\lambda(x,t)=(\partial_t-\Delta)^s w_\lambda(x,t)=f(t,u(x,t))-f(t,u_\lambda(x,t))\leq0
\end{equation}
at the points in $\Omega\times\mathbb{R}$ where $\bar{w}_{\lambda}(x,t)>0$ for sufficiently large $\lambda$, which satisfies the differential inequality in Theorem \ref{MPUB}\,.

We distinguish three cases and first argue that the assertion \eqref{Main5} is true for $|x_n|\leq a$ with any $\lambda\geq2a$. In this case, we must have $x_n+\lambda\geq a$, then \eqref{Main2} indicates that
$$1\geq u(x,t)>u_{\lambda}(x,t)+\frac{A}{2}>u_{\lambda}(x,t)\geq1-\delta$$
for any $(x',t)\in\mathbb{R}^{n-1}\times\mathbb{R}$ at the points where $\bar{w}_{\lambda}(x,t)>0$. Thereby for any fixed $t\in\mathbb{R}$, applying the non-increasing assumption on $f(t,u)$ for $u\in[1-\delta,1]$, we derive
\begin{equation*}
  (\partial_t-\Delta)^s\bar{w}_\lambda(x,t)=f(t,u(x,t))-f(t,u_\lambda(x,t))\leq0
\end{equation*}
at the points in $\Omega\times\mathbb{R}$ with $|x_n|\leq a$ where $\bar{w}_{\lambda}(x,t)>0$ for any $\lambda\geq2a$.

While if $x_n<-a$, then by virtue of \eqref{Main2}, we obtain
\begin{equation*}
 -1\leq u_\lambda(x,t)<u(x,t)-\frac{A}{2}<u(x,t)\leq-1+\delta
\end{equation*}
for any $(x',t)\in\mathbb{R}^{n-1}\times\mathbb{R}$ at the points where $\bar{w}_{\lambda}(x,t)>0$. Using the non-increasing assumption on $f(t,u)$ with respect to $u\in[-1,-1+\delta]$ for any fixed $t\in\mathbb{R}$, we derive
\begin{equation*}
  (\partial_t-\Delta)^s\bar{w}_\lambda(x,t)=f(t,u(x,t))-f(t,u_\lambda(x,t))\leq0
\end{equation*}
at the points in $\Omega\times\mathbb{R}$ with $x_n<-a$ where $\bar{w}_{\lambda}(x,t)>0$.

The last case is $x_n\in(a,M)$, by virtue of \eqref{Main2} again, we have
$$1\geq u(x,t)>u_{\lambda}(x,t)+\frac{A}{2}>u_{\lambda}(x,t)\geq1-\delta$$
for any $(x',t)\in\mathbb{R}^{n-1}\times\mathbb{R}$ at the points where $\bar{w}_{\lambda}(x,t)>0$. Then for any fixed $t\in\mathbb{R}$, applying the non-increasing assumption on $f(t,u)$ for $u\in[1-\delta,1]$ again, we deduce that the assertion \eqref{Main5} is valid for $x_n\in(a,M)$.

In conclusion, we show that $\bar{w}_{\lambda}(x,t)$ satisfies
\begin{equation*}
\left\{
\begin{array}{ll}
    (\partial_t-\Delta)^s\bar{w}_{\lambda}(x,t)\leq 0 ,~   & \mbox{at the points in}\,\, \Omega\times\mathbb{R} \,\,\mbox{where}\,\, \bar{w}_{\lambda}(x,t)>0 , \\
  \bar{w}_{\lambda}(x,t)\leq 0 , ~ &\mbox{in}\,\, \Omega^c \times\mathbb{R},
\end{array}
\right.
\end{equation*}
for any $\lambda\geq2a$. Here  the unbounded domain $\Omega=\mathbb{R}^{n-1}\times(-\infty,M)$ clearly satisfies the limit condition \eqref{MPUB1} stated in Theorem \ref{MPUB}\,.
Thus, Theorem \ref{MPUB} infers that $\bar{w}_\lambda(x,t)\leq 0$ in $\mathbb{R}^n\times\mathbb{R}$ for any $\lambda\geq2a$. That is to say,
$$w_\lambda(x,t)\leq \frac{A}{2}\,\, \mbox{in} \,\, \mathbb{R}^n\times\mathbb{R}$$
for any $\lambda\geq2a$, which is a contradiction with \eqref{Main3}. Hence, we verify that
\begin{equation*}
w_\lambda(x,t)\leq0, \, (x,t) \in \mathbb{R}^n\times\mathbb{R}
\end{equation*}
for any $\lambda\geq2a$. Indeed, Step 1 provides a starting point for sliding the domain along the $x_n$-axis.

\noindent \textbf{Step 2.} In this step, we continuously decrease $\lambda$ to its limiting position as long as the inequality \eqref{Main1} holds and define
\begin{equation*}
  \lambda_0:=\inf\{\lambda\mid w_\lambda(x,t)\leq0, \, (x,t)\in \mathbb{R}^n\times\mathbb{R} \}.
\end{equation*}
We devote to proving that the limiting position
$$\lambda_0=0.$$
The argument goes by contradiction, if not, then $\lambda_0>0$,
we show that $\lambda_0$ can be decreased a little bit
while the inequality \eqref{Main1} is still valid in this case, which contradicts the definition of $\lambda_0$.

With this aim in mind, we first claim that
\begin{equation}\label{Main6}
  \sup_{(x,t)\in \left(\mathbb{R}^{n-1}\times[-a,a]\right)\times \mathbb{R}} w_{\lambda_0}(x,t)<0,
\end{equation}
where the sufficiently large $a>0$ is defined in \eqref{Main2}.
If the assertion \eqref{Main6} is violated, then we have
\begin{equation*}
  \sup_{(x,t)\in \left(\mathbb{R}^{n-1}\times[-a,a]\right)\times \mathbb{R}} w_{\lambda_0}(x,t)=0,
\end{equation*}
which means that there exist sequences $\{(x^k,t_k)\}\subset \left(\mathbb{R}^{n-1}\times[-a,a]\right)\times \mathbb{R}$ and $\{\varepsilon_k\}\subset\mathbb{R}_+$ such that
\begin{equation*}
 w_{\lambda_0}(x^k,t_k)=:-\varepsilon_k\rightarrow 0,\,\, \mbox{as}\,\, k\rightarrow \infty.
\end{equation*}
We further introduce the following auxiliary function
\begin{equation*}
  w_k(x,t):= w_{\lambda_0}(x,t)+\varepsilon_k\eta_k(x,t)
\end{equation*}
to remedy the supremum of $w_{\lambda_0}(x,t)$ may not be attained due to the set $\left(\mathbb{R}^{n-1}\times[-a,a]\right)\times \mathbb{R}$ is unbounded.
Here
$$\eta_k(x,t)=\eta(x-x^k,t-t_k)\in C_0^\infty\left(B_1(x^k)\times(-1+t_k,1+t_k)\right)$$
is a smooth cut-off function satisfying
$$\eta_k(x,t)\equiv1\,\, \mbox{in}\,\,B_{\frac{1}{2}}(x^k)\times\left(-\frac{1}{2}+t_k,\frac{1}{2}+t_k\right),\,\,\mbox{and}\,\, 0\leq\eta_k(x,t)\leq 1.$$
Let the parabolic cylinder
$$Q_{1}(x^k,t_k):=B_{1}(x^k)\times (-1+t_k,1+t_k),$$
then through a direct calculation, we obtain
\begin{equation*}
  w_k(x^k,t_k)=w_{\lambda_0}(x^k,t_k)+\varepsilon_k\eta_k(x^k,t_k)
  =-\varepsilon_k+\varepsilon_k=0,
\end{equation*}
 and
\begin{equation*}
  w_k(x,t)=w_{\lambda_0}(x,t)\leq 0, \,\, \mbox{in}\,\, (\mathbb{R}^n\times \mathbb{R}) \setminus Q_{1}(x^k,t_k).
\end{equation*}
Hence, the perturbed function $w_k(x,t)$ can attain its maximum value at some point $(\bar{x}^k,\bar{t}_k)\in Q_{1}(x^k,t_k)$ such that
\begin{equation}\label{Main7}
\varepsilon_k  \geq w_k(\bar{x}^k,\bar{t}_k)=\sup_{\mathbb{R}^n\times \mathbb{R}}w_k(x,t)\geq0.
\end{equation}
Moreover, the definition of $w_k(x,t)$ yields that
\begin{equation}\label{Main8}
  0\geq w_{\lambda_0}(\bar{x}^k,\bar{t}_k)\geq-\varepsilon_k.
\end{equation}

For the sake of illustration, we introduce the translation function
\begin{equation*}
  \bar{w}_k(x,t):=w_k(x+\bar{x}^k,t+\bar{t}_k).
\end{equation*}
It follows from \eqref{Main7} that
\begin{equation}\label{Main9}
\varepsilon_k  \geq \bar{w}_k(0,0)=\sup_{\mathbb{R}^n\times \mathbb{R}}\bar{w}_k(x,t)\geq0,
\end{equation}
and then
\begin{equation*}
  (\partial-\Delta)^s\bar{w}_k(0,0)=C_{n,s}\int_{-\infty}^{0}\int_{\mathbb{R}^n}
  \frac{\bar{w}_k(0,0)- \bar{w}_k(y,\tau)}{(-\tau)^{\frac{n}{2}+1+s}}e^{\frac{|y|^2}{4\tau}}\operatorname{d}\!y\operatorname{d}\!\tau\geq 0.
\end{equation*}
Next, we aim to claim that
\begin{equation}\label{Main10}
  (\partial-\Delta)^s\bar{w}_k(0,0)\rightarrow 0\,\, \mbox{as}\,\, k\rightarrow\infty.
\end{equation}
Combining the equation satisfied by $u$ with Lemma \ref{mlem1}\,, we compute
\begin{eqnarray*}
 0\leq (\partial-\Delta)^s\bar{w}_k(0,0) &=& (\partial-\Delta)^sw_k(\bar{x}^k,\bar{t}_k)\nonumber \\
   &=&(\partial-\Delta)^sw_{\lambda_0}(\bar{x}^k,\bar{t}_k)+\varepsilon_k(\partial-\Delta)^s\eta_k(\bar{x}^k,\bar{t}_k)  \nonumber\\
   &\leq& f(\bar{t}_k,u(\bar{x}^k,\bar{t}_k))-f(\bar{t}_k,u_{\lambda_0}(\bar{x}^k,\bar{t}_k))+C\varepsilon_k\nonumber \\
  &\rightarrow&  0\,\, \mbox{as}\,\, k\rightarrow\infty,
\end{eqnarray*}
where the last line we use the continuity of $f$ and the fact that $$w_{\lambda_0}(\bar{x}^k,\bar{t}_k)=u(\bar{x}^k,\bar{t}_k)-u_{\lambda_0}(\bar{x}^k,\bar{t}_k)\rightarrow 0 \,\, k\rightarrow\infty$$
by \eqref{Main8}. Thus, we verify that the assertion \eqref{Main10} is valid.

Now applying Theorem \ref{weightAveIneq} to $\bar{w}_k(x,t)$ at its maximum point $(0,0)$, we have
\begin{equation}\label{Main11}
  \bar{w}_k(0,0) \leq \frac{ C_0}{C_{n,s}}r^{2s}(\partial_t-\Delta)^s\bar{w}_k(0,0)+C_0r^{2s}\int_{-\infty}^{-r^2}\int_{B^c_{r}(0)}
  \frac{\bar{w}_k(y,\tau)e^{\frac{|y|^2}{4\tau}}}{(-\tau)^{\frac{n}{2}+1+s}}\operatorname{d}\!y\operatorname{d}\!\tau
\end{equation}
for any $r>0$, where the positive constant $C_0$ is defined in Theorem \ref{weightAveIneq}\,.
If we select the radius $r>2$ such that the point $(y+\bar{x}^k,\tau+\bar{t}_k)\not\in Q_{1}(x^k,t_k)$ for $(y,\tau)\in B_r^c(0)\times(-\infty,-r^2)$, then
\begin{eqnarray*}
   && C_0r^{2s}\int_{-\infty}^{-r^2}\int_{B^c_{r}(0)}
  \frac{\bar{w}_k(y,\tau)}{(-\tau)^{\frac{n}{2}+1+s}}e^{\frac{|y|^2}{4\tau}}\operatorname{d}\!y\operatorname{d}\!\tau \\
   &=& C_0r^{2s}\int_{-\infty}^{-r^2}\int_{B^c_{r}(0)}
  \frac{w_{\lambda_0}(y+\bar{x}^k,\tau+\bar{t}_k)+\varepsilon_k \eta_{k}(y+\bar{x}^k,\tau+\bar{t}_k)  }{(-\tau)^{\frac{n}{2}+1+s}}e^{\frac{|y|^2}{4\tau}}\operatorname{d}\!y\operatorname{d}\!\tau\\
&=& C_0r^{2s}\int_{-\infty}^{-r^2}\int_{B^c_{r}(0)}
  \frac{w_{\lambda_0}(y+\bar{x}^k,\tau+\bar{t}_k)  }{(-\tau)^{\frac{n}{2}+1+s}}e^{\frac{|y|^2}{4\tau}}\operatorname{d}\!y\operatorname{d}\!\tau\leq 0
\end{eqnarray*}
by the definition of $\lambda_0$. Thereby a combination of \eqref{Main9}-\eqref{Main11} leads to
\begin{equation*}
  C_0r^{2s}\int_{-\infty}^{-r^2}\int_{B^c_{r}(0)}
  \frac{\bar{w}_k(y,\tau)}{(-\tau)^{\frac{n}{2}+1+s}}e^{\frac{|y|^2}{4\tau}}\operatorname{d}\!y\operatorname{d}\!\tau \rightarrow 0 \,\,\mbox{as}\,\, k\rightarrow\infty,
\end{equation*}
which indicates that
\begin{equation}\label{Main12}
  \bar{w}_k(x,t)\rightarrow 0 \,\, \mbox{for}\,\, (x,t)\in B_r^c(0)\times(-\infty,-r^2), \,\,\mbox{as}\,\, k\rightarrow\infty.
\end{equation}

We further take the same translation for $u$ as follows
\begin{equation*}
  u_k(x,t):=u(x+\bar{x}^k,t+\bar{t}_k),
\end{equation*}
then applying Arzel\`{a}-Ascoli theorem to deduce that there exists a subsequence of $\{u_k\}$ (still denoted by $\{u_k\}$) such that
\begin{equation}\label{Main13}
  u_k(x,t)\rightarrow u_\infty(x,t) \,\,\mbox{in}\,\, B_{2R}(0)\times(-R^2,-r^2), \,\,\mbox{as}\,\, k\rightarrow\infty
\end{equation}
for a fixed radius $R>\max\{2r,\,\lambda_0\}$ to be determined later.
Combining \eqref{Main12} with \eqref{Main13} and the definition of $\bar{w}_k$, we derive
\begin{eqnarray*}
  0 \leftarrow \bar{w}_k(x,t)&=&w_k(x+\bar{x}^k,t+\bar{t}_k) \\
  &=&w_{\lambda_0}(x+\bar{x}^k,t+\bar{t}_k)+\varepsilon_k\eta_k(x+\bar{x}^k,t+\bar{t}_k)\\
   &=& u(x+\bar{x}^k,t+\bar{t}_k)-u_{\lambda_0}(x+\bar{x}^k,t+\bar{t}_k)\\
   &=&u_k(x,t)-(u_k)_{\lambda_0}(x,t)\\
   &\rightarrow& u_\infty(x,t)-(u_\infty)_{\lambda_0}(x,t), \,\,\mbox{in}\,\, \left(B_{R}(0)\setminus B_r(0)\right)\times(-R^2,-r^2), \,\,\mbox{as}\,\, k\rightarrow\infty,
\end{eqnarray*}
which implies that
\begin{equation*}
  u_\infty(x,t)-(u_\infty)_{\lambda_0}(x,t)\equiv0, \,\,\mbox{in}\,\, \left(B_{R}(0)\setminus B_r(0)\right)\times(-R^2,-r^2).
\end{equation*}
Hence, it follows that
\begin{equation}\label{Main14}
  u_\infty(x',x_n,t)=u_\infty(x',x_n+\lambda_0,t)=u_\infty(x',x_n+2\lambda_0,t)=...=u_\infty(x',x_n+i\lambda_0,t)
\end{equation}
for any fixed $(x,t)\in\left(B_{R}(0)\setminus B_r(0)\right)\times(-R^2,-r^2)$ with $|x'|>r$, and $i\in \mathbb{N}$ such that
\begin{equation*}
  (x',x_n+(i-1)\lambda_0)\in B_{R}(0)\setminus B_r(0) \,\,\mbox{and}\,\,(x',x_n+i\lambda_0)\not\in B_{R}(0)\setminus B_r(0).
\end{equation*}
However, in terms of the uniform convergence condition on $u(x,t)$ and $\bar{x}^k_n$ is bounded due to $\bar{x}^k\in B_1(x^k)$ and $x^k_n\in [-a,a]$, we choose the radius $R$ large enough such that
\begin{equation*}
  u_\infty(x,t)\leq \delta-1 \,\,\mbox{for}\,\, x_n\leq-\frac{R}{2},\,\,\mbox{and}\,\, u_\infty(x,t)\geq1-\delta \,\,\mbox{for}\,\, x_n\geq\frac{R}{2},
\end{equation*}
which contradicts the equality \eqref{Main14} by selecting
\begin{equation*}
 (x,t)\in\left(B_{R}(0)\setminus B_r(0)\right)\times(-R^2,-r^2)\,\,\mbox{with} \,\, |x'|>r \,\, \mbox{and}\,\, x_n\leq -\frac{R}{2},
\end{equation*}
as illustrated in Figure 5 below.
\begin{center}
\begin{tikzpicture}[scale=0.8]
\draw  [->,purple, very thick] (0.9,1.6)--(1.3,2) node [above, purple, font=\fontsize{12}{12}\selectfont] {$u_\infty\geq1-\delta$};
\draw  [->,purple, very thick] (0.9,-0.9)--(1.3,-1.3) node [below, purple, font=\fontsize{12}{12}\selectfont] {$u_\infty\leq\delta-1$};
 \draw (0,0) ellipse[x radius=3cm, y radius=3cm];
\draw (0,0) ellipse[x radius=1.5cm, y radius=1.5cm];
\draw (0,0) ellipse[x radius=0.5cm, y radius=0.5cm];
\path (0.4,3.2) node[ font=\fontsize{10}{10}\selectfont] {$2R$};
\path (0.3,1.7) node[ font=\fontsize{10}{10}\selectfont] {$R$};
\path (0.2,0.6) node[ font=\fontsize{10}{10}\selectfont] {$r$};
\draw [very thick]  [black] [->,very thick](-4,0)--(4,0) node [anchor=north west] {$x'$};
\draw [very thick]  [black!80][->,very thick] (0,-4)--(0,4) node [black][ above] {$x_n$};
\draw [very thick] [dashed] [blue] (-4,-0.75)--(4,-0.75)node [blue][ font=\fontsize{10}{10}\selectfont][below] {$x_n=-\frac{R}{2}$};
\path (0.9,-0.9)[very thick,fill=red]  circle(1.6pt);
\path (0.9,-0.4)[very thick,fill=red]  circle(1.6pt);
\path (0.9,0.1)[very thick,fill=red]  circle(1.6pt);
\path (0.9,0.6)[very thick,fill=red]  circle(1.6pt);
\path (0.9,1.1)[very thick,fill=red]  circle(1.6pt);
\path (0.9,1.6)[very thick,fill=red]  circle(1.6pt);
\node [below=0.5cm, align=flush center,text width=12cm] at  (0,-4)
        {Figure 5. The choice of points. };
\end{tikzpicture}
\end{center}
Therefore, we conclude that the assertion \eqref{Main6} holds.

In the sequel, we continue to show that there exists a small positive constant $\varepsilon$ such that
\begin{equation}\label{Main15}
  w_\lambda(x,t)\leq 0, \,\,\mbox{in}\,\, \mathbb{R}^n\times\mathbb{R} \,\, \mbox{for any}\,\, \lambda\in (\lambda_0-\varepsilon,\lambda_0]
\end{equation}
in the case of $\lambda_0>0$.
 Note that combining the aforementioned conclusion \eqref{Main6} and the continuity of $w_\lambda$ with respect to $\lambda$, there exists a small positive constant $\varepsilon$ such that
\begin{equation}\label{Main16}
  \sup_{(x,t)\in \left(\mathbb{R}^{n-1}\times[-a,a]\right)\times \mathbb{R}} w_{\lambda}(x,t)\leq0 \,\, \mbox{for any}\,\, \lambda\in (\lambda_0-\varepsilon,\lambda_0].
\end{equation}
Then in order to verify the validity of \eqref{Main15}, it suffices to prove that
\begin{equation*}\label{Main17}
  \sup_{(x',t)\in \mathbb{R}^{n-1}\times \mathbb{R}, \,|x_n|>a} w_{\lambda}(x,t)\leq0 \,\, \mbox{for any}\,\, \lambda\in (\lambda_0-\varepsilon,\lambda_0].
\end{equation*}
Otherwise, there exists some $\lambda\in(\lambda_0-\varepsilon,\lambda_0]$ such that
\begin{equation}\label{Main18}
  \sup_{(x',t)\in \mathbb{R}^{n-1}\times \mathbb{R}, \,|x_n|>a} w_{\lambda}(x,t)=:A>0.
\end{equation}
If we directly choose $\Omega=\{(x,t)\in \mathbb{R}^{n}\times \mathbb{R}\mid |x_n|>a\}$ as an unbounded set to use the maximum principle established in Theorem \ref{MPUB}\,, the ``size'' of $\Omega^c$ is ``too small" as compared to the ``size'' of $\Omega$ in the sense of limit condition \eqref{MPUB1}, then this condition is not valid for such $\Omega$. Hence, we further need
to shrink the unbounded set $\Omega$.
\begin{center}
\begin{tikzpicture}[scale=0.8]
 \draw[blue!30,fill=blue!30] (-4,3) rectangle (4,1.5);
  \draw[blue!30,fill=blue!30] (-4,-3) rectangle (4,-1.5);
\draw [very thick]  [black] [->,very thick](-4,0)--(4,0) node [anchor=north west] {$x'$};
\draw [very thick]  [black!80][->,very thick] (0,-4)--(0,4) node [black][ above] {$x_n$};
\path node at (-0.3,-0.3) {$0$};
\draw [very thick] [dashed] [blue] (-4,3)--(4,3)node [black][right] {$x_n=M$};
\draw [very thick] [dashed] [blue] (-4,1.5)--(4,1.5)node [black][right] {$x_n=a$};
\draw [very thick] [dashed] [blue] (-4,-3)--(4,-3)node [black][right] {$x_n=-M$};
\draw [very thick] [dashed] [blue] (-4,-1.5)--(4,-1.5)node [black][right] {$x_n=-a$};
\path node at (1.5,2.25) {$\Omega$};
\path node at (1.5,-2.25) {$\Omega$};
\node [below=0.5cm, align=flush center,text width=12cm] at  (0,-4)
        {Figure 6. The stripe region $\Omega$. };
\end{tikzpicture}
\end{center}
More precisely, employing the uniform convergence
condition of $u(x,t)$, we can select
a sufficiently large constant $M>a$ such that
\begin{equation}\label{Main19}
w_\lambda(x,t)\leq \frac{A}{2} \,\,\mbox{for any} \,\, (x,t)\in \mathbb{R}^{n}\times\mathbb{R}\,\, \mbox{with}\,\, |x_n|\geq M.
\end{equation}
Let
$$\Omega:=\{(x,t)\in \mathbb{R}^{n}\times \mathbb{R}\mid a<|x_n|<M\}$$
be the blue stripe-shaped region in Figure 6,
and we denote
\begin{equation*}
  v_{\lambda}(x,t):=w_{\lambda}(x,t)-\frac{A}{2},
\end{equation*}
then a combination of \eqref{Main16} and \eqref{Main19} yields the exterior condition
\begin{equation*}
  v_{\lambda}(x,t)\leq 0, \,\,\mbox{in}\,\, \Omega^c\times\mathbb{R}.
\end{equation*}
Now we prove the following differential inequality
\begin{equation}\label{Main20}
  (\partial_t-\Delta)^sv_\lambda(x,t)=(\partial_t-\Delta)^s w_\lambda(x,t)=f(t,u(x,t))-f(t,u_\lambda(x,t))\leq0
\end{equation}
is fulfilled at the points in $\Omega\times\mathbb{R}$ where $v_{\lambda}(x,t)>0$.

Since $v_{\lambda}(x,t)>0$ infers that $w_{\lambda}(x,t)>0$, if $a<x_n<M$, then it follows from \eqref{Main2} that
$$1\geq u(x,t)>u_{\lambda}(x,t)\geq1-\delta$$
for any $(x',t)\in\mathbb{R}^{n-1}\times\mathbb{R}$ at the points where $v_{\lambda}(x,t)>0$. Applying the non-increasing assumption on $f(t,u)$ with respect to $u$ in $[1-\delta,1]$, we obtain
\begin{equation*}
  (\partial_t-\Delta)^sv_\lambda(x,t)=f(t,u(x,t))-f(t,u_\lambda(x,t))\leq0
\end{equation*}
at the points in $\Omega\times\mathbb{R}$ with $a<x_n<M$ where $v_{\lambda}(x,t)>0$. While if $-M<x_n<-a$, then \eqref{Main2} implies that
$$-1\leq u_{\lambda}(x,t)<u(x,t)\leq-1+\delta$$
for any $(x',t)\in\mathbb{R}^{n-1}\times\mathbb{R}$ at the points where $v_{\lambda}(x,t)>0$. Using the non-increasing assumption on $f(t,u)$ for $u\in[-1,-1+\delta]$, we derive
\begin{equation*}
  (\partial_t-\Delta)^sv_\lambda(x,t)=f(t,u(x,t))-f(t,u_\lambda(x,t))\leq0
\end{equation*}
at the points in $\Omega\times\mathbb{R}$ with $-M<x_n<-a$ where $v_{\lambda}(x,t)>0$.
In conclusion, we verify that the differential inequality \eqref{Main20} is valid. Applying Theorem \ref{MPUB} to $v_\lambda$ and combining with the definition of $v_\lambda$, we deduce that
\begin{equation*}
w_\lambda(x,t)\leq\frac{A}{2}, \, (x,t) \in \mathbb{R}^n\times\mathbb{R},
\end{equation*}
which is a contradiction with \eqref{Main18}. It follows that the assertion \eqref{Main15} is true, which contradicts the definition of $\lambda_0$, and hence $\lambda_0=0$ and $u(x,t)$ is increasing with respect to $x_n$ for any $t\in\mathbb{R}$.

We end up this step by further proving that $u(x,t)$ is strictly increasing with respect to $x_n$, i.e.,
\begin{equation}\label{Main21}
  w_\lambda(x,t)< 0, \,\,\mbox{in}\,\, \mathbb{R}^n\times\mathbb{R} \,\, \mbox{for any}\,\, \lambda>0.
\end{equation}
If \eqref{Main21} is violated, then there exist a fixed $\lambda_0>0$ and a point $(x^0,t_0)\in \mathbb{R}^n\times\mathbb{R}$ such that $w_{\lambda_0}(x^0,t_0)=0$, and $(x^0,t_0)$ is a maximum point of $ w_{\lambda_0}(x,t)$ in $\mathbb{R}^n\times\mathbb{R}$. On one hand, we directly calculate
\begin{equation*}
  (\partial_t-\Delta)^sw_{\lambda_0}(x^0,t_0) =C_{n,s}\int_{-\infty}^{t_0}\int_{\mathbb{R}^n}
  \frac{-w_{\lambda_0}(y,\tau)}{(t_0-\tau)^{\frac{n}{2}+1+s}}e^{-\frac{|x^0-y|^2}{4(t_0-\tau)}}\operatorname{d}\!y\operatorname{d}\!\tau \geq 0.
\end{equation*}
On the other hand, we have
\begin{equation*}
 (\partial_t-\Delta)^sw_{\lambda_0}(x^0,t_0) =f(t_0,u(x^0,t_0))-f(t_0,u_{\lambda_0}(x^0,t_0))=0.
\end{equation*}
As a consequence of the above estimates, we derive
\begin{equation*}
  w_{\lambda_0}(x,t)\equiv 0, \,\,\mbox{in}\,\, \mathbb{R}^n\times(-\infty, t_0),
\end{equation*}
which contradicts the uniform convergence condition condition of
$u(x,t)$ with respect to $x_n$ for any fixed $(x',t)\in\mathbb{R}^{n-1}\times(-\infty, t_0)$.
Therefore, we deduce that \eqref{Main21} is valid, and then $u(x,t)$ is strictly increasing
with respect to $x_n$.

\noindent \textbf{Step 3.} We finally prove that the entire solution $u(x,t)$ is one-dimensional symmetry for any $t\in\mathbb{R}$, that is,
 $u(x,t)=u(x_n,t)$ is independent of $x'$.
By proceeding similarly as in Step 1 and Step 2, we can derive
$$u(x+\lambda\nu,t)>u(x,t),\,\,  \mbox{in}\,\, \mathbb{R}^n\times\mathbb{R} \,\, \mbox{for any}\,\, \lambda>0,$$
and every vector $\nu=(\nu_1,...,\nu_n)$ with $\nu_n>0$. This implies that $u(x,t)$ is strictly increasing along any direction which has an acute angle with the positive $x_n$ axis.

 Let $\nu_n \rightarrow 0$, then by the continuity of $u(x,t)$, the inequality is still preserved in the sense
$$u(x+\lambda\nu,t)\geq u(x,t),\, (x,t)\in \mathbb{R}^n\times\mathbb{R} \,\, \mbox{for any}\,\, \lambda>0.$$
Note that $\nu$ can be any given direction perpendicular to $x_n$ axis, then we conclude that $u(x,t)$ must be independent of $x'$, i.e., $u(x,t)=u(x_n,t)$ for any $t\in\mathbb{R}$. Hence, we complete the proof of Gibbons' conjecture for master equation.
\end{proof}

\section*{Acknowledgments}
The work of the first author is partially supported by MPS Simons foundation 847690,
and the work of the second author is partially supported by the National Natural Science Foundation of China (NSFC Grant No.12101452).

\end{document}